\documentclass[11pt]{article}
\usepackage[a4paper,margin=26mm]{geometry}
\usepackage[T1]{fontenc}
\usepackage{lmodern,microtype,amsmath,amssymb,amsthm,mathtools,booktabs,array}
\usepackage[dvipsnames]{xcolor}
\usepackage{tikz}
\usepackage{longtable}
\usetikzlibrary{arrows.meta,positioning,calc}
\usepackage[colorlinks=true,linkcolor=MidnightBlue,citecolor=MidnightBlue,
 urlcolor=MidnightBlue]{hyperref}
\usepackage{fancyhdr}
\numberwithin{equation}{section}
\newtheorem{theorem}{Theorem}[section]
\newtheorem{proposition}[theorem]{Proposition}
\newtheorem{lemma}[theorem]{Lemma}
\newtheorem{corollary}[theorem]{Corollary}
\theoremstyle{definition}
\newtheorem{definition}[theorem]{Definition}
\newtheorem{remark}[theorem]{Remark}
\newtheorem{example}[theorem]{Example}
\newcommand{\Q}{\mathbb Q}\newcommand{\Z}{\mathbb Z}
\newcommand{\C}{\mathbb C}\newcommand{\F}{\mathbb F}
\newcommand{\OO}{\mathcal O}\newcommand{\EE}{\mathcal E}
\newcommand{\pp}{\mathfrak p}\newcommand{\qq}{\mathfrak q}
\newcommand{\Cl}{\operatorname{Cl}}\newcommand{\Spec}{\operatorname{Spec}}
\newcommand{\nrd}{\operatorname{nrd}}\newcommand{\trd}{\operatorname{trd}}
\newcommand{\Nm}{\operatorname{N}}\newcommand{\GL}{\operatorname{GL}}
\newcommand{\PGL}{\operatorname{PGL}}
\newcommand{\one}{\mathbf 1}
\newcommand{\wh}{\widehat}
\newcommand{\End}{\operatorname{End}}

\title{The Bellaterra graphs are Ramanujan}
\author{Olga Kharlampovich, Alina Vdovina}
\date{17 October 2026}

\hypersetup{pdftitle={The Bellaterra graphs are Ramanujan},
 pdfsubject={Bellaterra automaton, cubic Ramanujan graphs, and quaternionic arithmetic},
 pdfkeywords={Bellaterra, Ramanujan, Schreier graph, quaternion algebra, Hecke operator}}
\begin{document}
\maketitle
\begin{abstract}
The Bellaterra automaton gives three involutions of every set of binary
words of a fixed length. We identify their Schreier graph at level $n$
with a cubic arithmetic graph over $\Q(\sqrt{17})$, and prove that every
nonconstant adjacency eigenvalue satisfies the Ramanujan bound.
In particular, this proves Malyshev--Pak's Conjecture
 that the Bellaterra graphs form a family of two-sided expanders.

\end{abstract}

\section{Introduction}
\label{sec:introduction}

Three permutations $a,b,c$ of the binary words of length $n$ define a
finite graph: from $w$ take one step to $a(w)$, $b(w)$, or $c(w)$.
For the Bellaterra automaton these permutations are involutions, so
the adjacency matrix is symmetric and has row sum three. The question
is whether the resulting graphs remain uniformly well connected as
$n$ grows. We prove the precise spectral estimate
\[
       |\lambda|\leq2\sqrt2
       \quad\text{for every eigenvalue on the nonconstant subspace.}
\]
Section~\ref{sec:theorem} defines the graphs and explains why this is
the cubic Ramanujan bound. The proof uses exact identities that hold
at every level, followed by a stated theorem from arithmetic.

\subsection{The expansion problem}

The problem of constructing expander or Ramanujan families from finite
automata was posed by Grigorchuk
\cite[Section~12, Problem~12.3(a,b)]{Gri05}.
Ramanujan graphs of valence $q+1$
with $q$ an odd prime power, generated by automata, were contructed by Bondarenko, Grigorchuk and Vdovina
in \cite[Theorem~2]{BGV}.
For the Bellaterra level graphs, Grigorchuk explicitly asked whether they form
an expander family
\cite[Example~10.2 and Problem~10.1, p.~155]{Gri11}.
Malyshev and Pak subsequently formulated the two-sided expansion
conjecture \cite[Conjecture~1.3]{MP}.
We prove the stronger Ramanujan estimate for the cubic adjacency
operator defined below. \subsection{A guide to the proof}

The construction compares three descriptions of the same finite set:
\begin{equation}\label{eq:roadmap}
 \{0,1\}^n
 \xrightarrow[\text{explicit coding}]{\ \kappa_n\ }
 \Z/2^n\Z
 \xleftrightarrow[\text{all ideal classes}]{\ \cong\ }
 \Cl(\EE_n).
\end{equation}
The first set carries the automaton action. The second carries
fractional-linear transformations arising from three quaternions.
The third is a finite arithmetic class set on which a Hecke operator
is defined. Here $\EE_n$ is an order, a quaternionic analogue of a
ring of integers, with a congruence condition at the level prime.
Definitions appear before these objects are used in the proof.

The first bijection \ref{eq:roadmap} is proved by six multiplication identities. The second follows from a base
class-number-one calculation. We then prove that the sum of the
three arithmetic permutations is the Hecke operator on all ideal classes.
Finally, Jacquet--Langlands and the Hilbert modular Ramanujan theorem
bound its nonconstant eigenvalues.

\section{The Bellaterra graphs and the theorem}
\label{sec:theorem}

\subsection{Rooted trees, sections, and finite automata}

The \emph{rooted binary tree} has vertex set $\{0,1\}^*$, the finite
words in two letters. Its root is the empty word. The two children of
$w$ are $w0,w1$, and its level $n$ is $V_n=\{0,1\}^n$. A rooted-tree
automorphism is a permutation of the vertices that fixes the root
and preserves adjacency. Equivalently, it preserves word length and
the operation of deleting the last letter.

Every such automorphism $g$ has a permutation $\sigma$ of the first
letter and two automorphisms $g_0,g_1$ of the remaining word:
\[
                  g(ew)=\sigma(e)g_e(w),\qquad e\in\{0,1\}.
\]
The maps $g_e$ are called its \emph{sections}. The notation
$g=(g_0,g_1)\sigma$ is called \emph{wreath recursion}; an identity
$\sigma$ is omitted. Iterating gives the section $g|_w$ at any word.
A finite invertible automaton specifies finitely many states by
their first-letter permutations and their sections, which are again
among the states. It reads one letter, writes one letter, and moves
to the indicated state for the remaining input.

For a word $w\in\{0,1\}^*$, define the Bellaterra states by
\begin{equation}\label{eq:recursion}
\begin{array}{lll}
 a(0w)=0b(w),& a(1w)=1c(w),\\
 b(0w)=0c(w),& b(1w)=1b(w),\\
 c(0w)=1a(w),& c(1w)=0a(w).
\end{array}
\end{equation}
All three fix the empty word. In wreath-recursion notation this is
\[
             a=(b,c),\qquad b=(c,b),\qquad c=(a,a)\sigma,
             \qquad \sigma=(01).
\]
This is the Bellaterra automaton, with the binary coordinates
complemented relative to \cite[Section~5]{VV}. More precisely, if
$\delta_n$ complements every letter of a word of length $n$, then
conjugating the convention $a=(c,b)$, $b=(b,c)$,
$c=(a,a)\sigma$ by $\delta_n$ gives \eqref{eq:recursion}.
Thus the convention does not affect the graphs or their spectra.

For example,
\[
 a(000)=0b(00)=00c(0)=001,\qquad a(010)=0b(10)=01b(0)=010.
\]
The recursion defines a bijection at every level by induction on word
length. The same induction shows that the states are involutions:
their squares have identity first-letter action and sections among
the preceding-level squares. We write
$\mathcal B=\langle a,b,c\rangle$ for the group they generate.

\subsection{Schreier graphs and their adjacency matrices}

If a group generated by involutions acts on a set, its \emph{Schreier
graph} has that set as vertices and one transition $v\mapsto s(v)$
for each chosen generator $s$. For a transitive action it can also
be described on the cosets $\mathcal B/H$, where $H$ is the
stabilizer of a vertex. The generating set is part of the graph data.
Two different generators may connect the same pair of vertices.

\begin{definition}
Let $V_n=\{0,1\}^n$. Write $a_n,b_n,c_n$ for the permutation matrices
of the three states on $\C^{V_n}$, with $a_ne_w=e_{a(w)}$, and put
\begin{equation}\label{eq:adjacency}
                         M_n=a_n+b_n+c_n.
\end{equation}
The Bellaterra level graph $G_n$ is the graph with this adjacency
matrix. Edges are counted with their state labels and multiplicities.
A fixed state contributes one to a diagonal entry, so a fixed-point
petal is a half-loop in the degree convention used here.
\end{definition}

Thus $M_n$ is a real symmetric matrix with nonnegative integral
entries and row sum three. A drawn half-loop contributes one diagonal
entry, so its degree contribution is one. This convention is fixed by
\eqref{eq:adjacency}; ordinary loops counted twice would define a
different operator. On functions, the same operator is
$(M_nf)(w)=f(a(w))+f(b(w))+f(c(w))$.

\begin{example}[The first level]
Both $a$ and $b$ fix $0$ and $1$, while $c$ interchanges them. Hence
\[
       M_1=\begin{pmatrix}2&1\\1&2\end{pmatrix},\qquad
       \Spec(M_1)=\{3,1\}.
\]
At each vertex the first two states supply the two fixed-state
contributions, and the third supplies the edge to the other vertex.
\end{example}

We use the inner product
$\langle f,g\rangle=\sum_{w\in V_n}\overline{f(w)}g(w)$.
The constant function $\one$ always has eigenvalue $3$. Its orthogonal
complement is
\[
              \one^\perp=\{f:V_n\to\C:\ \sum_{w\in V_n}f(w)=0\}.
\]
For any graph defined by three involutions $s_1,s_2,s_3$,
\begin{equation}\label{eq:energy}
 \langle f,(3I-M)f\rangle
       =\frac12\sum_v\sum_{i=1}^3|f(v)-f(s_i(v))|^2.
\end{equation}
It follows that the eigenspace of $3$ consists of the functions
constant on each connected component. Its dimension is therefore the
number of components. This observation will prove connectedness at
the end of the argument.

\subsection{The spectral question}

The adjacency operator of the infinite cubic tree has spectral radius
$2\sqrt2$. A connected cubic graph is called \emph{Ramanujan} when
every adjacency eigenvalue other than the trivial values $3$ and,
if present, $-3$, lies in $[-2\sqrt2,2\sqrt2]$. The same spectral
definition applies to our involution graphs, with the matrix
convention above. We prove the stronger statement that every
eigenvalue on $\one^\perp$ belongs to that interval; in particular,
$-3$ does not occur.

The \emph{normalized adjacency} or random-walk operator is $P_n=M_n/3$.
It chooses one of the three states with probability $1/3$.
A family has a uniform \emph{absolute spectral gap}, or is a family
of \emph{two-sided expanders}, if there is an $\varepsilon>0$
independent of $n$ such that
$\|P_n|_{\one^\perp}\|\leq1-\varepsilon$.
The Ramanujan estimate gives the explicit gap
$\varepsilon=1-2\sqrt2/3>0$.

\begin{theorem}[Bellaterra Ramanujan theorem]\label{thm:main}
For every $n\geq1$, the graph $G_n$ is connected, the eigenvalue $3$
of $M_n$ is simple, and
\begin{equation}\label{eq:bound}
 \Spec\bigl(M_n|_{\one^\perp}\bigr)
                       \subseteq[-2\sqrt2,2\sqrt2].
\end{equation}
In particular, the Bellaterra level graphs are cubic Ramanujan graphs
with the adjacency convention \eqref{eq:adjacency}.

\end{theorem}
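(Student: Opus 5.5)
The plan follows the roadmap \eqref{eq:roadmap}. First I will construct an explicit coding $\kappa_n:\{0,1\}^n\to\Z/2^n\Z$ and three quaternions $\alpha,\beta,\gamma$ in a definite quaternion algebra $B$ over $F=\Q(\sqrt{17})$. The choice of $F$ is forced because $2$ splits in $\Q(\sqrt{17})$. At a prime $\pp\mid 2$ with $\OO_\pp\cong\Z_2$, the quaternions will have reduced norm of valuation one. Their images in $\PGL_2(F_\pp)$ will act by fractional-linear transformations on $\mathbb P^1(\Z_2)$, and reducing modulo $2^n$ gives the action on $\Z/2^n\Z$. The point of this step is to verify that $\kappa_n\circ s=\phi_s\circ\kappa_n$ for $s\in\{a,b,c\}$, where $\phi_s$ is the corresponding Möbius map. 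By the wreath recursion \eqref{eq:recursion}, this reduces to six multiplication identities of the form $\alpha\cdot(\text{digit shift by }e)=(\text{digit shift by }\sigma(e))\cdot\beta$ up to scalars and units, one for each line of \eqref{eq:recursion}. These are then checked by induction on $n$, since the sections of $a,b,c$ are again among $a,b,c$.

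Second, I will identify $\Z/2^n\Z$, or rather the relevant $\mathbb P^1$-orbit, with the class set $\Cl(\EE_n)$. Here $\EE_n$ is the Eichler-type order of level $\pp^n$ (with the congruence condition at $\pp$) inside a maximal order $\OO$ of $B$. The key input is that the maximal order has class number one and that $\OO^\times/\OO_F^\times$ acts in a controlled way. Granting these, strong approximation gives $\Cl(\EE_n)\cong \OO^\times\backslash \GL_2(\OO_\pp)/K_0(\pp^n)$. This set is in bijection with the $\OO^\times$-orbits on $\mathbb P^1(\OO_\pp/\pp^n)$, and after dividing out it should reduce to $2^n$ points matching $\Z/2^n\Z$. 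Next, I will show that $\alpha,\beta,\gamma$, together with the units, are exactly representatives of the $N\pp+1=3$ elements of reduced norm $\pp$ modulo $\OO^\times$. Once this is established, $M_n$ becomes conjugate to the Hecke operator $T_\pp$ acting on functions on $\Cl(\EE_n)$.

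Third, I will decompose $L^2(\Cl(\EE_n))$. The constant function is the only one-dimensional, norm-factoring constituent. Factoring through the norm here uses that the narrow class group of $F$ is trivial and that the relevant characters have no nontrivial part at level $\pp^n$; this needs checking. Every other constituent is, by Jacquet--Langlands, a cuspidal automorphic representation of $\GL_2$ over $F$ of parallel weight $2$. By the Ramanujan theorem for Hilbert modular forms (Blasius), its $T_\pp$-eigenvalue satisfies $|\lambda|\le 2\sqrt{N\pp}=2\sqrt2$. This gives \eqref{eq:bound}. Simplicity of the eigenvalue $3$ then follows, because $3>2\sqrt2$ and $3$ therefore cannot occur on $\one^\perp$. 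By \eqref{eq:energy}, connectedness of $G_n$ follows as well. A similar argument excludes $-3$, since a sign character of the norm would have to be trivial.

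The main obstacle will be the first step: finding $\alpha,\beta,\gamma$ and $\kappa_n$ so that the Möbius action literally reproduces \eqref{eq:recursion}, including the swap in $c$. My first attempt will take $\kappa_n(e_1\cdots e_n)=\sum e_i2^{i-1}$, possibly with a twist. I will match the recursion $c=(a,a)\sigma$ to a map $x\mapsto \text{const}+2\cdot(\cdot)$ lifted to an element of $B$ whose $\pp$-adic image is $\begin{pmatrix}2&*\\0&1\end{pmatrix}$ up to units. A secondary difficulty is making sure the $\OO^\times$-orbit count in the second step gives exactly $2^n$ classes rather than a quotient of that set.
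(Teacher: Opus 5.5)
There is a genuine gap, and it is structural: you put both the finite levels and the Hecke operator at the same prime $\pp$. An element of $\PGL_2(\Q_2)$ whose determinant has odd valuation exchanges the two bipartition classes of the Bruhat--Tits tree, so it fixes no vertex. In particular it does not fix the vertex whose spheres are $\mathbb P^1(\Z/2^n\Z)$, and it induces no permutation of $\Z/2^n\Z$ compatible with reduction. Concretely, $g(x)-g(y)=\det g\,(x-y)/((cx+d)(cy+d))$ is never a local isometry when $v_2(\det g)$ is odd; already $x\mapsto 2x$ fails. So your three quaternions cannot both have reduced norm of valuation one at $\pp$ and act on the levels at $\pp$.

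Independently, the Jacquet--Langlands and Blasius input bounds the $(\Nm\mathfrak l+1)$-neighbor operator only at primes $\mathfrak l\nmid\mathfrak N$. With level $\pp^n$, the operator you would get at $\pp$ is not the unramified $T_\pp$ that the theorem controls.

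You noticed that $2$ splits in $\Q(\sqrt{17})$, but the splitting is needed exactly to separate these two roles. Write $(2)=\pp\qq$. The paper's generators have $\nrd(A)=\nrd(B)=\nrd(C)=\rho$, a uniformizer at $\pp$ but a unit at $\qq$, and they satisfy $A\equiv B\equiv C\equiv 1\pmod{\qq\OO}$. At $\qq$ they lie in the scalar-mod-$2$ subgroup of $\GL_2(\Z_2)$, preserve the branch $2\Z_2$, and permute $\Z/2^n\Z$ by M\"obius maps. At $\pp$ they represent the three right cosets of $\OO_\pp^\times$ in $\OO_\pp^\times\operatorname{diag}(\rho,1)\OO_\pp^\times$; these are distinct because $\trd(gh)=\rho-1$ is a $\pp$-unit for $g\ne h$. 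The level then lives at $\qq$, the Hecke prime $\pp$ does not divide it, and the spectral theorem applies.

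Two further corrections appear once this is repaired.

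First, your worry about the count is justified. By the mass formula, level $n$ at either dyadic prime gives $2^{n-2}$ classes for $n\geq 2$, not $2^n$. The paper uses level $\qq^{n+2}$ and takes as base the Eichler order $\EE_0$ of level $\qq^2$, which has class number one and only scalar units. Then $\Cl(\EE_n)\cong\mathcal U_0/\mathcal U_n\cong K/K_n\cong\Z/2^n\Z$, with no orbit quotient at all.

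Second, it is the prefix (digit-shift) maps, not the generators, that carry odd determinant valuation at the level prime. They cannot be taken affine, so the binary-integer coding fails. The paper finds $H_0,H_1\in D$ with $AH_0=-H_0B$, $BH_0=-H_0C$, $CH_0=H_1A$, $AH_1=H_1C$, $BH_1=H_1B$, $CH_1=H_0A$. At $\qq$ the matrices $H_s/4$ have determinant of valuation one and define non-affine contractions $h_0,h_1$ of $\Z_2$ onto $2\Z_2$ and $1+2\Z_2$. The coding is then $\kappa_n(w)=h_{w_1}\circ\cdots\circ h_{w_n}(0)\bmod 2^n$.
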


The theorem will be proved in Section 8.

Deletion of the last letter commutes with the three states. Thus these
graphs form a compatible tower. The pullback map
$J_{\rm B,n}e_w=e_{w0}+e_{w1}$ satisfies
$M_{n+1}J_{\rm B,n}=J_{\rm B,n}M_n$.
The arithmetic change of coordinates will preserve this tower, as well
as the individual state labels. No torsion-free hypothesis will be
imposed on the arithmetic group.

\begin{figure}[htbp]
\centering
\begin{tikzpicture}[state/.style={circle,draw=MidnightBlue,
 fill=MidnightBlue!5,minimum size=9mm},
 every edge/.style={draw,-{Latex[length=2mm]},thick}]
 \node[state] (a) at (0,1.6) {$a$};
 \node[state] (b) at (-3,0) {$b$};
 \node[state] (c) at (3,0) {$c$};
 \path (a) edge node[above left] {$0\mid0$} (b);
 \path (a) edge node[below left,pos=.45] {$1\mid1$} (c);
 \path (b) edge node[below] {$0\mid0$} (c);
 \path (b) edge[loop left] node[left] {$1\mid1$} (b);
 \path (c) edge[bend right=35]
 node[above right,pos=.55] {$0\mid1,\ 1\mid0$} (a);
\end{tikzpicture}
\caption{The Bellaterra automaton in the convention of
\eqref{eq:recursion}. An arrow is labelled input$\mid$output, and
its target is the section acting on the remaining word.}
\end{figure}

\section{Dyadic arithmetic and the two primes above two}
\label{sec:dyadic}

\subsection{Compatible residues and the dyadic integers}

The \emph{dyadic integers} can be defined by
\[
                    \Z_2=\varprojlim_m\Z/2^m\Z.
\]
An element is a sequence of residues modulo $2,4,8,\ldots$ that agree
under reduction. Addition and multiplication are performed in every
residue ring. For $x\ne0$, the valuation $v_2(x)$ is the largest
integer $m$ for which $x\equiv0\pmod{2^m}$; set $v_2(0)=+\infty$.
The distance is $|x-y|_2=2^{-v_2(x-y)}$. Thus closeness means agreement
modulo a high power of two. The fraction field is $\Q_2$. An element
of $\Z_2$ is a unit, meaning that it has an inverse in $\Z_2$, exactly
when it is odd.

Every ball $a+2^m\Z_2$ splits into two balls of half the dyadic radius:
\[
 a+2^m\Z_2=(a+2^{m+1}\Z_2)\ \sqcup\
                         (a+2^m+2^{m+1}\Z_2).
\]
These refinements form a rooted binary tree. Its level $m$ is
$\Z/2^m\Z$, with parent map given by reduction. A compatible bijection
between this tree and the word tree will be our map $\kappa_n$.
The first input letter specifies the first residue choice, hence the
least significant dyadic digit. The full coding is nonlinear: a word
must not be replaced by its usual binary integer.

We use only the following elementary form of \emph{Hensel lifting}.
If $f$ is an integer polynomial and $x_m$ is a root modulo $2^m$
with $f'(x_m)$ odd, precisely one of $x_m,x_m+2^m$ is a root modulo
$2^{m+1}$. Indeed,
\[
 f(x_m+e2^m)\equiv f(x_m)+e2^m f'(x_m)\pmod{2^{m+1}}
                  \qquad(e=0,1).
\]
A simple root modulo two therefore determines a unique dyadic root.
All the local coordinates used below can be computed in this way to
any desired finite precision.

\subsection{The real quadratic field}

A number field is a finite extension of $\Q$. Its ring of integers
consists of its elements satisfying monic integer polynomials.
For the real quadratic field below this ring has the indicated basis.
The two real embeddings send $r$ to $\sqrt{17}$ and $-\sqrt{17}$.
An element is \emph{totally positive} if its images under both
embeddings are positive. The field norm $\Nm_{F/\Q}(x)$ is the
product of its two conjugates.

Throughout, set
\begin{equation}\label{eq:parameters}
\begin{gathered}
 r=\sqrt{17},\qquad \omega=\frac{1+r}{2},\qquad
 \alpha=\frac{r-3}{2}=\omega-2,\qquad \OO_F=\Z[\omega],\\
 \eta=\frac{5+r}{2}=4+\alpha,\qquad
 \rho=\frac{5-r}{2}=1-\alpha,\\
 \alpha^2+3\alpha-2=0,\qquad \eta\rho=2,\qquad
 \qq=(\eta),\qquad\pp=(\rho).
\end{gathered}
\end{equation}
Both $\eta$ and $\rho$ are totally positive of norm $2$.
The factorization $(2)=\pp\qq$ has two distinct primes of norm $2$.
The prime $\pp$ will define the cubic Hecke operator, while $\qq$
will define the levels.

For clarity, a nonzero prime ideal $\mathfrak l$ of $\OO_F$ has norm
$\Nm\mathfrak l=|\OO_F/\mathfrak l|$, the number of elements in its
residue field. The completion $F_{\mathfrak l}$ records congruences
modulo successive powers of $\mathfrak l$, just as $\Q_2$ records
dyadic congruences. Its valuation ring, denoted $\OO_{F,\mathfrak l}$,
consists of the elements with nonnegative valuation. We use this
notation for the completed valuation ring throughout. A
\emph{uniformizer} is an element of valuation one.

The equation $\omega^2-\omega-4=0$ reduces modulo two to $X(X-1)$.
The two residue homomorphisms send $\omega$ to $0$ and $1$,
respectively. They have kernels $\qq$ and $\pp$. Hensel lifting
shows that both completions $F_{\qq}$ and $F_{\pp}$ are isomorphic
to $\Q_2$, with distinct embeddings of $F$.

\begin{center}
\renewcommand{\arraystretch}{1.2}
\begin{tabular}{lcc}
\toprule
 & Active prime $\pp$ & Level prime $\qq$\\
\midrule
$\omega$ in the residue field & $1$ & $0$\\
$v_{\mathfrak l}(\rho)$ & $1$ & $0$\\
$v_{\mathfrak l}(\eta)$ & $0$ & $1$\\
Role in the graph & three neighbors & $2^n$ vertices\\
\bottomrule
\end{tabular}
\end{center}
The same quaternion may have determinant of valuation one at $\pp$
and be an integral invertible matrix at $\qq$. This difference in
local behavior is what permits the two roles in the construction.
\section{The quaternion algebra and three involutions}
\label{sec:quaternion}

\subsection{Quaternions and projective involutions}

Our quaternion algebra is the four-dimensional $F$-algebra
\[
 D=F\langle i,j\rangle/(i^2=-1,\ j^2=-3,\ ji=-ij),
 \qquad k=ij.
\]

Its basis is $1,i,j,k$. Quaternion conjugation changes the signs of
$i,j,k$; write $\bar x$ for the conjugate of $x$. The
\emph{reduced trace} is $x+\bar x$ and the \emph{reduced norm} is
$x\bar x$, both in $F$. For $x=x_0+x_1i+x_2j+x_3k$,
\[
 \trd(x)=2x_0,\qquad
 \nrd(x)=x_0^2+x_1^2+3x_2^2+3x_3^2.
\]

The reduced norm is multiplicative, and every element satisfies
\begin{equation}\label{eq:quadraticquaternion}
                x^2-\trd(x)x+\nrd(x)=0.
\end{equation}
The projective group is $D^\times/F^\times$: two nonzero quaternions
define the same projective element when they differ by a nonzero
scalar in $F$. Thus a noncentral trace-zero quaternion with nonzero
norm defines an involution in this group, even though its square in
$D$ need not equal $1$.

At a completion $F_{\mathfrak l}$ we write
$D_{\mathfrak l}=D\otimes_F F_{\mathfrak l}$.
The algebra \emph{splits} at that place if it is isomorphic to
$M_2(F_{\mathfrak l})$. Under such an isomorphism, reduced trace and
norm become matrix trace and determinant. At a place where the
quaternion algebra is a division algebra, it is called
\emph{ramified}. At either real embedding the displayed norm is
positive definite, so $D$ is a division algebra there. This is the
meaning of \emph{totally definite} in this note.

It splits at every finite place. The explicit order below certifies
this through its unit discriminant: the discriminant of a maximal
quaternion order is divisible by every finite ramified prime
\cite[Section~2]{KV}.

\subsection{Orders and an explicit maximal order}

An $\OO_F$-\emph{order} in $D$ is a subring containing $1$ that is
also a full rank-four $\OO_F$-lattice. Here a full lattice means a
finitely generated $\OO_F$-submodule spanning the ambient $F$-vector
space. A \emph{maximal order} is one
not properly contained in any other order. For an order $\mathcal R$,
write $\mathcal R_{\mathfrak l}=
\mathcal R\otimes_{\OO_F}\OO_{F,\mathfrak l}$.
The group $\mathcal R^\times$ consists of elements whose inverses
also belong to the order. Specifying the integral lattice matters:
it determines the congruences that will define the finite graphs.

Define
\begin{equation}\label{eq:maxorder}
 u=\frac{1+j}{2},\qquad
 v=\frac{i-r}{2}+\frac{k+rj}{6},\qquad
 \OO=\OO_F\langle1,i,u,v\rangle.
\end{equation}
Here angle brackets denote the $\OO_F$-span. The multiplication table
\[
\renewcommand{\arraystretch}{1.45}
\begin{array}{c|ccc}
 &i&u&v\\\hline
 i&-1&2r-i-ru+3v&11-ri-6u+rv\\
 u&-2r+2i+ru-3v&u-1&-r+i-v\\
 v&-12+6u-rv&r-i-ru+2v&-6-rv
\end{array}
\]
proves that $\OO$ is an order. For $e=(1,i,u,v)$, direct calculation
gives
\begin{equation}\label{eq:discriminant}
              \det\bigl(\trd(e_s e_t)\bigr)_{s,t=1}^4=-1.
\end{equation}

Here the trace pairing sends $(x,y)$ to $\trd(xy)$. Its determinant
measures the local index of the integral lattice. To see the
maximality test directly, an inclusion of orders multiplies the trace
determinant by the square of the change-of-basis determinant. Trace
pairings on orders are integral. A unit determinant leaves no room
for a nontrivial local index, so the order is maximal. Equivalently,
its reduced discriminant ideal, whose square is the trace
discriminant ideal, is $(1)$. In particular, $D$ is unramified at all
finite primes. The multiplication table and determinant are finite
algebraic checks of the exact integral order used in the proof.

\subsection{The generators}

The following pure quaternions will act as the three automaton states:
\begin{equation}\label{eq:ABC}
 A=\frac{j+\alpha k}{3},\qquad
 B=\frac j3-\frac{\alpha i}{2}-\frac{\alpha k}{6},\qquad
 C=\frac j3+\frac{\alpha i}{2}-\frac{\alpha k}{6}.
\end{equation}
Their order coordinates are
\begin{align}
 A&=(11-2r)-\alpha i+(r-5)u+(r-3)v,\label{eq:coordinates}\\
 B&=(r-6)+\frac{7-r}{2}u-\alpha v,\notag\\
 C&=B+\alpha i.\notag
\end{align}
In particular $A,B,C\in\OO$. The relation
$1+\alpha^2=3\rho$ gives
\begin{equation}\label{eq:generatornorms}
 \trd(A)=\trd(B)=\trd(C)=0,\qquad
 \nrd(A)=\nrd(B)=\nrd(C)=\rho,
 \qquad A^2=B^2=C^2=-\rho.
\end{equation}
Thus the projective classes of $A,B,C$ are involutions. Also
\begin{equation}\label{eq:pairtraces}
 A+B+C=j,\qquad
 \trd(AB)=\trd(AC)=\trd(BC)=\rho-1=-\alpha.
\end{equation}

The residue of $\omega$ at $\qq$ is zero. To check a crucial
congruence in the entire order, rewrite \eqref{eq:coordinates} as
\begin{align*}
 A&=(13-4\omega)+(2-\omega)i+(-6+2\omega)u+(-4+2\omega)v,\\
 B&=(-7+2\omega)+(4-\omega)u+(2-\omega)v,\qquad
 C=B+(-2+\omega)i.
\end{align*}
It follows that
\begin{equation}\label{eq:scalarmod2}
              A\equiv B\equiv C\equiv1\pmod{\qq\OO}.
\end{equation}

\subsection{The local coordinate at the level prime}
\label{subsec:splitting}

Identify $F_{\qq}$ with $\Q_2$. This embedding selects the even root
$\alpha\in\Z_2$ of $X^2+3X-2$, so
$\alpha\equiv10\pmod{16}$. Let $z\in\Z_2$ be the even root of
$z^2+z+2=0$; then $z\equiv2\pmod8$. Hensel's lemma gives both roots
uniquely with these parities. A splitting of $D_{\qq}$ is
\begin{equation}\label{eq:split}
 i\longmapsto\begin{pmatrix}1&1+z\\z&-1\end{pmatrix},
 \qquad
 j\longmapsto\begin{pmatrix}-1&-2\\2&1\end{pmatrix}.
\end{equation}
Multiplication verifies the quaternion relations. In this same
coordinate,
\[
 u\longmapsto\begin{pmatrix}0&-1\\1&1\end{pmatrix},\qquad
 v\longmapsto\frac13
 \begin{pmatrix}2+z-2r&1+2z-r\\z-1+r&-2-z-r\end{pmatrix}.
\]
Every entry is integral. Equation~\eqref{eq:discriminant} shows that
this inclusion of orders is an equality:
\begin{equation}\label{eq:localmaximal}
                         \OO_{\qq}=M_2(\Z_2).
\end{equation}
The arithmetic order and the subsequent coding thus use precisely
the same local lattice.

By \eqref{eq:scalarmod2}, $A,B,C$ lie in the scalar-modulo-$2$
subgroup of $\GL_2(\Z_2)$. Each preserves $x\in2\Z_2$. In the
coordinate $x=2t$, a matrix $g=\left(\begin{smallmatrix}a&b\\c&d
\end{smallmatrix}\right)$ in this subgroup acts as
\begin{equation}\label{eq:mobius}
             \widetilde g(t)=\frac{g(2t)}2
                         =\frac{at+b/2}{2ct+d}.
\end{equation}
Here $a,d$ are odd and $b,c$ are even, so the denominator is odd.
The three induced permutations on $\Z/2^n\Z$ are therefore completely
specified by \eqref{eq:ABC}, \eqref{eq:split}, and \eqref{eq:mobius}.

They commute with reduction modulo powers of two: denominators are
units, and subtraction of two fractional-linear expressions preserves
congruences. Their adjacency sum will be denoted by $C_n$.
This already defines an explicit sequence of cubic arithmetic graphs
on $\Z/2^n\Z$, before any ideal-class or automorphic terminology is
introduced.

\section{The cubic lattice tree and its binary branch}
\label{sec:trees}

\subsection{The local tree}

Let $L_0=\Z_2^2$. A \emph{lattice} in $\Q_2^2$ is a free rank-two
$\Z_2$-submodule spanning $\Q_2^2$. Two lattices are homothetic if
one is a nonzero scalar multiple of the other. The vertices of the
\emph{Bruhat--Tits tree} are their homothety classes $[L]$. Two
classes are adjacent if representatives satisfy
\[
                  2L\subset L'\subset L,\qquad [L:L']=2.
\]
For fixed $L$, the intermediate lattices correspond to the
one-dimensional subspaces of $L/2L\simeq\F_2^2$. There are exactly
three such lines. Hence every vertex has three neighbors.

The elementary-divisor theorem over $\Z_2$ gives a convenient
description of the graph. Every homothety class has the form
$[\Z_2v+2^m\Z_2w]$, where $(v,w)$ is a basis of $L_0$ and
$m\geq0$. Call $m$ its elementary-divisor height relative to
$[L_0]$. Directly calculating the three neighbors shows that a
vertex of height $m>0$ has one neighbor of height $m-1$ and two
of height $m+1$. Thus the graph is connected and has no cycles,
and height is exactly distance from $[L_0]$. The lattice graph
is a tree, and the region beyond a chosen edge is a rooted binary
tree.

The group $\PGL_2(\Q_2)=\GL_2(\Q_2)/\Q_2^\times$ acts by
$[L]\mapsto[gL]$. The stabilizer of $[L_0]$ is
$\PGL_2(\Z_2)$. An integral matrix with determinant of valuation one
has elementary divisors $1,2$, up to units, and therefore moves
$[L_0]$ to a neighbor. At the active prime $\pp$, this applies to
$A,B,C$ because their reduced norm is $\rho$.

\subsection{The branch used for the finite levels}

The ends of this tree are identified with
$\mathbb P^1(\Q_2)$, the one-dimensional subspaces of $\Q_2^2$.
In the affine chart whose column representative is $(x,1)$, the ends
in the branch reducing to $(0,1)$ modulo two are $x\in2\Z_2$.
Writing $x=2t$, the next $n$ binary choices give $t\bmod2^n$.
This is the geometric meaning of the local coordinate
\eqref{eq:mobius}.

By \eqref{eq:scalarmod2}, the three quaternions act as the identity
on the three initial directions at $\qq$. They preserve the selected
branch and act on every finite level within it. The two local roles
can therefore be stated without arithmetic shorthand: the same
three group elements move to neighbors in one cubic tree and
permute a finite binary level in the other. Quaternionic groups
acting on products of trees, their congruence quotients, and the
Hecke interpretation of adjacency are treated in
\cite[Sections~2, 3, and~6]{RSV}.

\section{An explicit conjugacy at every level}
\label{sec:coding}

We now identify the three arithmetic permutations with the three
automaton states. The idea is to imitate prefixing a word by $0$ or
$1$ with two dyadic maps $h_0,h_1$. For example, the automaton rule
$a(0w)=0b(w)$ becomes the identity
$\widetilde A h_0=h_0\widetilde B$. Six such identities will give
the entire recursion. This is the explicit step that connects the
arithmetic construction to this particular automaton.

\subsection{Six identities and two contractions}

Composition of fractional-linear maps is matrix multiplication, and
scalar multiples define the same map. We can therefore search for the
prefix maps directly in the four-dimensional quaternion algebra.
Define
\begin{equation}\label{eq:H}
 H_0=-3\alpha+6i+\alpha j+2k,
 \qquad H_1=-6-3\alpha i+2j-\alpha k.
\end{equation}
Multiplication in $D$, using $\alpha^2=2-3\alpha$, gives
\begin{equation}\label{eq:six}
\begin{aligned}
 AH_0&=-H_0B,& BH_0&=-H_0C,& CH_0&=H_1A,\\
 AH_1&= H_1C,& BH_1&= H_1B,& CH_1&=H_0A.
\end{aligned}
\end{equation}
The two reduced norms are
\begin{equation}\label{eq:Hnorm}
                 \nrd(H_0)=\nrd(H_1)=36(\rho+1).
\end{equation}
These are identities in a four-dimensional algebra over
$\Q[\alpha]/(\alpha^2+3\alpha-2)$, so they require only rational
arithmetic to verify.

In the local splitting \eqref{eq:split}, write $U_s=H_s/4$.
For completeness their matrices are
\begin{align}
 U_0&=\begin{pmatrix}
 2+z-\alpha&1+2z-\alpha/2\\
 z+\alpha/2-1&-2-z-\alpha/2
 \end{pmatrix},\label{eq:U0}\\
 U_1&=\begin{pmatrix}
 -2-\alpha-\alpha z/2&-1-\alpha/2-\alpha z\\
 1+\alpha/2-\alpha z/2&-1+\alpha+\alpha z/2
 \end{pmatrix}.
 \label{eq:U1}
\end{align}
They are integral because $\alpha$ is even. The specified residues
of $\alpha,z$ give
\begin{equation}\label{eq:Umod8}
 U_0\equiv\begin{pmatrix}2&0\\6&7\end{pmatrix}\pmod8,
 \qquad U_1\equiv\begin{pmatrix}2&6\\4&3\end{pmatrix}\pmod8.
\end{equation}
Also $\rho+1=2-\alpha\equiv8\pmod{16}$, so
\[
 v_2(\det U_s)=v_2\bigl(36(\rho+1)/16\bigr)=1.
\]

Write $U_s=\left(\begin{smallmatrix}a_s&b_s\\c_s&d_s
\end{smallmatrix}\right)$ and define
\begin{equation}\label{eq:prefix}
                 h_s(t)=\frac{a_st+b_s/2}{2c_st+d_s}
                 \qquad(t\in\Z_2,\ s\in\{0,1\}).
\end{equation}

\begin{lemma}\label{lem:contractions}
The maps $h_0,h_1$ are bijections onto $2\Z_2$ and $1+2\Z_2$,
respectively. For $t\ne u$,
\begin{equation}\label{eq:metric}
                 v_2\bigl(h_s(t)-h_s(u)\bigr)=1+v_2(t-u).
\end{equation}
\end{lemma}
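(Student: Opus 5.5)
The plan is to work directly with the fractional-linear formula \eqref{eq:prefix} and the congruences \eqref{eq:Umod8}. From these congruences we read off the parities of the entries. For $s=0$: $a_0$ is even, $b_0\equiv0\pmod 8$ (so $b_0/2\in 4\Z_2$), $c_0$ is even and $d_0$ is odd. For $s=1$: $a_1$ is even, $b_1\equiv6\pmod8$ (so $b_1/2$ is odd), $c_1$ is even and $d_1$ is odd. In both cases the denominator $2c_st+d_s$ is odd for $t\in\Z_2$, so $h_s$ is well defined, continuous and $\Z_2$-valued. The numerator $a_st+b_s/2$ is even for $s=0$ and odd for $s=1$, so $h_0(\Z_2)\subseteq2\Z_2$ and $h_1(\Z_2)\subseteq1+2\Z_2$.

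For the metric identity \eqref{eq:metric}, I will use the standard difference formula for Möbius maps:
\[
 h_s(t)-h_s(u)=\frac{(a_sd_s-b_sc_s)(t-u)}{(2c_st+d_s)(2c_su+d_s)}.
\]
This is verified by clearing denominators: the cross terms give $a_sd_s(t-u)+(b_s/2)(2c_s)(u-t)$. The denominators are units, so $v_2(h_s(t)-h_s(u))=v_2(\det U_s)+v_2(t-u)$. The paper has already computed $v_2(\det U_s)=1$ from $\rho+1\equiv8\pmod{16}$ and $\nrd(H_s)=36(\rho+1)$, which gives \eqref{eq:metric}. In particular each $h_s$ is injective.

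Surjectivity onto the stated ball is the only step needing an argument beyond bookkeeping. I would solve explicitly. Given $y$ in the target ball, the equation $h_s(t)=y$ becomes
\[
 t(a_s-2c_sy)=d_sy-b_s/2 .
\]
The coefficient $a_s-2c_sy$ has valuation exactly $1$: $a_s\equiv2\pmod 8$ forces $v_2(a_s)=1$, while $2c_sy\in4\Z_2$ because $c_s$ is even. So a solution $t\in\Z_2$ exists exactly when $d_sy-b_s/2$ is even.

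For $s=0$, $y$ is even and $b_0/2\equiv0\pmod4$, so the right side is even. For $s=1$, $y$ is odd, $d_1$ is odd and $b_1/2$ is odd, so the right side is again even. Hence $t=(d_sy-b_s/2)/(a_s-2c_sy)\in\Z_2$. The denominator in $h_s(t)$ is nonzero for this $t$, since it is odd, so $h_s(t)=y$. This gives the bijections onto $2\Z_2$ and $1+2\Z_2$.

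An alternative argument for surjectivity is compactness plus the measure scaling implied by \eqref{eq:metric}, but the explicit inversion is cleaner. The main care point is confirming the exact $2$-adic valuations of $a_s$ and $b_s/2$ from \eqref{eq:Umod8}, which requires the residues modulo $8$ rather than modulo $2$.
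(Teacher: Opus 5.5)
Your proof is correct and matches the paper's argument: you read the parities of $a_s,b_s,c_s,d_s$ off \eqref{eq:Umod8}, derive \eqref{eq:metric} and injectivity from the difference formula with $v_2(\det U_s)=1$, and prove surjectivity with the same explicit inverse $t=(d_sy-b_s/2)/(a_s-2c_sy)$, whose denominator has valuation exactly one. The only difference is cosmetic. You invoke residues modulo $8$, whereas the paper needs only $a_s\equiv2\pmod4$, $c_s$ even, and $b_s/2\equiv s\pmod2$.
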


\begin{proof}
By \eqref{eq:Umod8}, $a_s,b_s,c_s$ are even, $d_s$ is odd,
$a_s\equiv2\pmod4$, and $b_s/2\equiv s\pmod2$. The denominator in
\eqref{eq:prefix} is odd, and $h_s(t)\equiv s\pmod2$.
Subtracting the two fractions gives
\[
 h_s(t)-h_s(u)=
 \frac{\det(U_s)(t-u)}{(2c_st+d_s)(2c_su+d_s)},
\]
which proves \eqref{eq:metric} and injectivity. For
$y\equiv s\pmod2$, the inverse is explicitly
\begin{equation}\label{eq:inverseprefix}
                   h_s^{-1}(y)=\frac{d_s y-b_s/2}{a_s-2c_s y}.
\end{equation}
The numerator is even, and the denominator has valuation one because
$a_s\equiv2\pmod4$ and $c_s$ is even. This lies in $\Z_2$ and
solves $h_s(t)=y$, proving surjectivity.
\end{proof}

The valuation identity says exactly that each $h_s$ multiplies dyadic
distances by $1/2$. After $n$ such maps, changing the input by an
integral amount changes the output by a multiple of $2^n$. This is
the precision statement needed for a proof at every level.

\subsection{The vertex bijections}

For $w=w_1\cdots w_n\in V_n$, set
\begin{equation}\label{eq:kappa}
       \boxed{\displaystyle
       \kappa_n(w)=h_{w_1}\circ h_{w_2}\circ\cdots\circ h_{w_n}(0)
                            \pmod{2^n}.}
\end{equation}
Thus the first letter is the outermost contraction. The final argument
$0$ can be replaced by any element of $\Z_2$ without changing the
residue modulo $2^n$, by \eqref{eq:metric}.

\begin{proposition}\label{prop:coding}
For every $n\geq1$, the map $\kappa_n:V_n\to\Z/2^n\Z$ is a
bijection and satisfies
\begin{equation}\label{eq:intertwinegenerators}
 \kappa_n(a(w))=\widetilde A(\kappa_n(w)),\quad
 \kappa_n(b(w))=\widetilde B(\kappa_n(w)),\quad
 \kappa_n(c(w))=\widetilde C(\kappa_n(w)).
\end{equation}
Moreover,
\begin{equation}\label{eq:prefixcompatibility}
                 \kappa_{n+1}(we)\equiv\kappa_n(w)\pmod{2^n}
                 \qquad(e\in\{0,1\}).
\end{equation}
\end{proposition}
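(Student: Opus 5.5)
The plan is to work with full dyadic maps first and reduce modulo $2^n$ only at the end. For an infinite or long word we use the composite $h_w=h_{w_1}\circ\cdots\circ h_{w_n}$, a map $\Z_2\to\Z_2$.

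\textbf{Prefix compatibility.} Iterating \eqref{eq:metric}, we get
\[
v_2\bigl(h_w(t)-h_w(t')\bigr)=n+v_2(t-t').
\]
Hence $h_w(\Z_2)$ lies in a single ball of radius $2^{-n}$, so the residue $h_w(t)\bmod 2^n$ does not depend on $t$. Now $\kappa_{n+1}(we)=h_w(h_e(0))$ and $\kappa_n(w)=h_w(0)$. These have the same residue modulo $2^n$, which proves \eqref{eq:prefixcompatibility}.

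\textbf{Bijectivity.} We argue by induction on $n$. Lemma~\ref{lem:contractions} says that $h_0,h_1$ map $\Z_2$ bijectively onto the two residue classes modulo $2$. Suppose $h_w(\Z_2)$ is exactly one full ball $x+2^n\Z_2$ for each word $w$ of length $n$. Then $h_{w_1}$ carries a ball of radius $2^{-(n-1)}$ onto a ball of radius $2^{-n}$ inside $w_1+2\Z_2$, by the distance identity together with surjectivity. This gives the inductive step.

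Distinct words give disjoint balls. If the first letters differ, the balls lie in different classes modulo $2$. Otherwise we peel off the common first letter, use injectivity of $h_{w_1}$, and apply induction. So the $2^n$ words give $2^n$ disjoint balls of radius $2^{-n}$, and these must be all residue classes modulo $2^n$. Hence $\kappa_n$ is a bijection.

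\textbf{Intertwining.} Let $\epsilon_s=\pm1$ denote the signs appearing in \eqref{eq:six}. The six identities \eqref{eq:six} have the shape $XH_s=\pm H_{s'}Y$. Scalars act trivially on fractional-linear maps, and $U_s=H_s/4$. So in the splitting \eqref{eq:split} they give the identities of maps on $\Z_2$
\[
\widetilde A h_0=h_0\widetilde B,\quad \widetilde B h_0=h_0\widetilde C,\quad \widetilde C h_0=h_1\widetilde A,\quad
\widetilde A h_1=h_1\widetilde C,\quad \widetilde B h_1=h_1\widetilde B,\quad \widetilde C h_1=h_0\widetilde A .
\]
For this step one checks one point. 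The map $\widetilde g$ of \eqref{eq:mobius} and $h_s$ of \eqref{eq:prefix} are both the conjugate of the M\"obius action $x\mapsto gx$ by $x=2t$. Therefore $\widetilde{gh}=\widetilde g\circ h$ whenever the composition is defined. Here $h_s$ corresponds to $x\mapsto U_s(2t)/2$, which is exactly the normalization in \eqref{eq:prefix}, and the image points stay in $\Z_2$ by Lemma~\ref{lem:contractions}.

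These six identities match \eqref{eq:recursion} letter by letter. Induction on word length then gives, for every state $g\in\{a,b,c\}$ with quaternion $G$,
\[
\widetilde G\circ h_w=h_{g(w)}\circ\widetilde{G'},
\]
where $G'$ is the quaternion of the section $g|_w$. Evaluating at $0$ and reducing modulo $2^n$ gives \eqref{eq:intertwinegenerators}. The right-hand value $\widetilde{G'}(0)$ is irrelevant, since the residue of $h_{g(w)}$ modulo $2^n$ does not depend on its argument.

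\textbf{Main obstacle.} I expect the delicate step to be the passage from the quaternion identities \eqref{eq:six} to identities of dyadic maps. It needs care on three points. First, the signs and scalars must be harmless. Second, the change of coordinate $x=2t$ must be compatible for the non-invertible matrices $U_s$, whose determinant has valuation $1$; this is why $h_s$ divides $b_s$ by $2$ but multiplies $c_s$ by $2$, matching \eqref{eq:mobius}. Third, no denominator may vanish. I would settle these by a direct comparison of the matrix products, using the parities from \eqref{eq:Umod8} and \eqref{eq:scalarmod2}.
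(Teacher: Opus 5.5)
Your proposal is correct and follows essentially the same route as the paper. You use the $2^{-n}$ contraction for prefix compatibility and for the irrelevance of the terminal argument, induction through the disjoint images of $h_0,h_1$ for bijectivity, and you push $\widetilde G$ through $h_{w_1}\cdots h_{w_n}$ using the six map identities obtained from \eqref{eq:six} by the coordinate change $x=2t$. The only difference is that you spell out details the paper leaves implicit, namely that each $h_w(\Z_2)$ is a full ball and that $h_s(t)=U_s(2t)/2$, so composition matches matrix multiplication.
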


\begin{proof}
The disjoint images in Lemma~\ref{lem:contractions}, followed by
\eqref{eq:metric} within each image, prove by induction that the
$2^n$ words give distinct residues modulo $2^n$. Hence $\kappa_n$
is a bijection. For the compatibility statement, the terminal
argument changes from $0$ to $h_e(0)$ after the same first $n$
contractions; their combined contraction factor is $2^{-n}$.

Pass from \eqref{eq:six} to projective M\"obius transformations.
Scalar signs disappear, and conjugation by the coordinate $x=2t$
gives
\begin{equation}\label{eq:maprecursion}
\begin{array}{lll}
 \widetilde A h_0=h_0\widetilde B,
 &\widetilde B h_0=h_0\widetilde C,
 &\widetilde C h_0=h_1\widetilde A,\\
 \widetilde A h_1=h_1\widetilde C,
 &\widetilde B h_1=h_1\widetilde B,
 &\widetilde C h_1=h_0\widetilde A.
\end{array}
\end{equation}
These are precisely the six transitions in \eqref{eq:recursion}.
For example, pushing $\widetilde A$ through
$h_{w_1}\cdots h_{w_n}$ replaces the word by $a(w)$ and leaves a
terminal state $\widetilde g$, for some $g\in\{A,B,C\}$:
\[
 \widetilde A h_{w_1}\cdots h_{w_n}(0)
          =h_{(a(w))_1}\cdots h_{(a(w))_n}\bigl(\widetilde g(0)\bigr).
\]
The terminal value belongs to $\Z_2$, and the $n$ contractions make
its replacement by $0$ invisible modulo $2^n$. This proves the first
identity in \eqref{eq:intertwinegenerators}; the other two follow in
the same way.
\end{proof}

The inverse coding is equally explicit. Given $t\in\Z_2$, record
its parity $s_1$ and replace it by $h_{s_1}^{-1}(t)$. Record the
new parity $s_2$ and repeat. Lemma~\ref{lem:contractions} guarantees
that each step is defined. This recovers a unique infinite word;
the forward coding recovers $t$ as its compatible sequence of residues.
At finite level, the first $n$ steps recover the unique word
$\kappa_n^{-1}(t\bmod2^n)$.

\subsection{The matrices and the maps between levels}

Let $T_n:\C^{V_n}\to\C^{\Z/2^n\Z}$ be the permutation matrix
\begin{equation}\label{eq:Tentries}
       (T_n)_{t,w}=\begin{cases}
                    1,&t=\kappa_n(w),\\0,&t\ne\kappa_n(w).
                   \end{cases}
\end{equation}

A permutation matrix changes the names of the vertices. In particular,
it preserves the inner product and the eigenvalues of adjacency.
Proposition~\ref{prop:coding} gives
\begin{equation}\label{eq:Tconjugacy}
           C_nT_n=T_nM_n,\qquad T_n^*T_n=T_nT_n^*=I,
           \qquad T_n\one=\one.
\end{equation}
Define pullback maps by
\[
 J_{\rm B,n}e_w=e_{w0}+e_{w1},\qquad
 J_{\rm ar,n}e_t=e_t+e_{t+2^n}.
\]
Equation~\eqref{eq:prefixcompatibility} implies
\begin{equation}\label{eq:towerintertwiner}
                       T_{n+1}J_{\rm B,n}=J_{\rm ar,n}T_n.
\end{equation}
For the alternative Bellaterra convention used in \cite{VV}, the
vertex map is $\kappa_n\circ\delta_n$.

\section{The ideal-class graph and its Hecke operator}
\label{sec:brandt}

The coding has already proved that the automaton graphs and the
explicit residue graphs are isomorphic. To invoke an arithmetic
spectral theorem, we must identify the residue graph with the whole
arithmetic class graph, including every vertex and every neighbor.
That is the purpose of this section.

\subsection{Eichler orders and congruence stabilizers}

An \emph{Eichler order} is the intersection of two maximal orders.
At a split local place, it is conjugate to
\[
 \begin{pmatrix}\OO_{F,\mathfrak l}&\OO_{F,\mathfrak l}\\
 \pi^m\OO_{F,\mathfrak l}&\OO_{F,\mathfrak l}\end{pmatrix}.
\]
Here $\pi$ is a uniformizer and $m\geq0$. The exponent $m$ is its
local level exponent. The product of
$\mathfrak l^m$ over the finite primes is its \emph{level ideal}.
Geometrically, $m$ is the distance between the two vertices whose
endomorphism rings are the maximal orders being intersected. Its
projective unit group fixes that segment pointwise.

For $n\geq0$, specify a global order $\EE_n\subset\OO$ by
\begin{equation}\label{eq:eichler}
 (\EE_n)_{\mathfrak l}=\OO_{\mathfrak l}\quad(\mathfrak l\ne\qq),
 \qquad
 (\EE_n)_{\qq}=\begin{pmatrix}\Z_2&2^{n+1}\Z_2\\2\Z_2&\Z_2\end{pmatrix}.
\end{equation}
These local conditions define the order by intersection in $D$.
Conjugation by $\operatorname{diag}(2^{n+1},1)$ gives
\[
 \begin{pmatrix}2^{-n-1}&0\\0&1\end{pmatrix}
 \begin{pmatrix}a&2^{n+1}b\\2c&d\end{pmatrix}
 \begin{pmatrix}2^{n+1}&0\\0&1\end{pmatrix}
 =\begin{pmatrix}a&b\\2^{n+2}c&d\end{pmatrix}.
\]

Thus $\EE_n$ is an Eichler order of level $\qq^{n+2}$.
The shift by two has a geometric explanation. Put
\[
 L_-=2^{-1}\Z_2e_1+\Z_2e_2,\qquad
 L_+=2^{n+1}\Z_2e_1+\Z_2e_2.
\]
Directly computing their endomorphism rings gives
\[
 \End(L_-)\cap\End(L_+)
    =\begin{pmatrix}\Z_2&2^{n+1}\Z_2\\2\Z_2&\Z_2\end{pmatrix}.
\]
The endpoints have distance $n+2$: one is one edge on the other side
of the base vertex, while the second is $n+1$ edges into the chosen
branch. The local level consequently records $n+2$ edges, although
our finite set has only $n$ binary choices.

At $n=0$, its projective local unit group is
\begin{equation}\label{eq:K2}
 K=\ker\bigl(\PGL_2(\Z_2)\longrightarrow\PGL_2(\F_2)\bigr).
\end{equation}
Indeed, a unit in the matrix order has odd diagonal entries and even
off-diagonal entries, and over $\F_2$ this means reduction to the
identity. The group $K$ acts transitively on $t\bmod2^n$ in
\eqref{eq:mobius}: use the translations
$\left(\begin{smallmatrix}1&2t\\0&1\end{smallmatrix}\right)$.
Its stabilizer of $0\bmod2^n$ is the projective image $K_n$ of
$(\EE_n)_{\qq}^{\times}$. Consequently
\begin{equation}\label{eq:localcosets}
                       K/K_n\simeq\Z/2^n\Z,
                       \qquad gK_n\longmapsto\widetilde g(0).
\end{equation}

\subsection{Ideal classes, class numbers, and the mass}

A \emph{fractional ideal} of $\OO_F$ is an $\OO_F$-lattice in $F$.
The ordinary ideal class group identifies two such ideals when they
differ by multiplication by an element of $F^\times$. The
\emph{narrow} class group permits multiplication only by totally
positive elements. Their cardinalities are the ordinary and narrow
class numbers. Narrow class number one will ensure that the only
exceptional arithmetic eigenfunction is the constant function.

For an order $\mathcal R\subset D$, a \emph{right $\mathcal R$-ideal}
means a full lattice $I\subset D$ with $I\mathcal R\subset I$;
fractional lattices are allowed. It is \emph{locally principal} if
$I_{\mathfrak l}=x_{\mathfrak l}\mathcal R_{\mathfrak l}$ for some
$x_{\mathfrak l}\in D_{\mathfrak l}^\times$ at every finite prime.
Two such ideals represent the same class if $I=dJ$ for a
$d\in D^\times$. Multiplication is on the left because these are
right ideals. Their finite class set is $\Cl(\mathcal R)$ and its
cardinality is $h(\mathcal R)$.

The \emph{left order} of $I$ is
$\mathcal R_I=\{x\in D:xI\subseteq I\}$. Its projective unit
group is $\mathcal R_I^\times/\OO_F^\times$. For locally principal
ideals of a definite order these groups are finite. The
\emph{mass} is the weighted count
\[
 \operatorname{Mass}(\mathcal R)
       =\sum_{[I]\in\Cl(\mathcal R)}
                       \frac{1}{|\mathcal R_I^\times/\OO_F^\times|}.
\]
The mass formula is a standard arithmetic input
\cite[Section~5]{KV}. For a totally definite quaternion algebra over
a totally real field of degree $d$, with finite discriminant $(1)$,
and an Eichler order of level $\mathfrak N$, its specialization is
\begin{equation}\label{eq:generalmass}
 \operatorname{Mass}(\mathcal R)=
  2^{1-d}h_F|\zeta_F(-1)|\,\Nm\mathfrak N
     \prod_{\mathfrak l\mid\mathfrak N}
                       (1+\Nm\mathfrak l^{-1}).
\end{equation}
Here $\zeta_F$ is the Dedekind zeta function, initially
$\sum_{\mathfrak a\ne0}(\Nm\mathfrak a)^{-s}$ over nonzero
integral ideals for $\Re(s)>1$, continued to $s=-1$. In our case its value is elementary to compute.
We will also prove that every weight in the mass is one, so the mass
is the actual number of vertices.

\begin{lemma}\label{lem:unitsmass}
The field $F$ has narrow class number one and $\zeta_F(-1)=1/3$.
Every Eichler order of level $\qq^m$ with $m\geq2$ has only scalar
units. Its ideal class number is $2^{m-2}$; in particular,
\begin{equation}\label{eq:baseclassone}
                  h(\EE_0)=1,\qquad \EE_0^\times=\OO_F^\times.
\end{equation}
\end{lemma}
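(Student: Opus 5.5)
We will prove Lemma~\ref{lem:unitsmass} in four steps. Step one handles the field, step two evaluates the mass formula \eqref{eq:generalmass}, step three kills the nonscalar units, and step four equates the class number with the mass.

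\emph{The field.} The discriminant of $F=\Q(\sqrt{17})$ is $17$, and the Minkowski bound is $\tfrac12\sqrt{17}<3$. So only primes above $2$ could be nonprincipal, and $\pp=(\rho)$, $\qq=(\eta)$ are principal by \eqref{eq:parameters}; hence $h_F=1$. The fundamental unit $4+\sqrt{17}$ has norm $-1$, so the ordinary and narrow class groups coincide, giving narrow class number one. For $\zeta_F(-1)$ we will use Siegel's formula for real quadratic fields,
\[
\zeta_F(-1)=\tfrac1{60}\sum_{\substack{b^2<D\\ b\equiv D\ (2)}}\sigma_1\Bigl(\tfrac{D-b^2}{4}\Bigr),\qquad D=17.
\]
The admissible values are $b=\pm1,\pm3$, giving $\sigma_1(4)=7$ and $\sigma_1(2)=3$. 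The sum is therefore $2(7+3)=20$, and $\zeta_F(-1)=20/60=1/3$.

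\emph{The mass.} Substituting into \eqref{eq:generalmass} with $d=2$, $h_F=1$ and $\mathfrak N=\qq^m$ of norm $2^m$ gives
\[
\operatorname{Mass}=\tfrac12\cdot\tfrac13\cdot2^m\cdot\tfrac32=2^{m-2}.
\]

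\emph{Units.} This is the step I expect to be the main obstacle. Let $\mathcal R$ be the left order of some class. Suppose $\gamma\in\mathcal R^\times$ is nonscalar. Its image in $\mathcal R^\times/\OO_F^\times$ has finite order, so $\gamma$ generates a CM quadratic extension $K=F(\gamma)$ inside the totally definite algebra $D$. We may rescale $\gamma$ by a unit of $F$ so that its reduced norm is $1$; this uses narrow class number one, since every totally positive unit is then a square. Then $\gamma$ is a root of unity of order $3,4,5,6,8,10,12$, and $K\supset F(\zeta_\ell)$ for the corresponding $\ell$.

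The condition $\Q(\zeta_\ell+\zeta_\ell^{-1})\subseteq F$ forces $\ell\in\{3,4,6\}$, because $\sqrt2,\sqrt3,\sqrt5\notin F$. We then need to exclude $K=F(i)$ and $K=F(\sqrt{-3})$, and also the non-root-of-unity elements whose square is a scalar; these arise as projective torsion with $K=F(\sqrt{-\varepsilon})$ or $F(\sqrt{-\varepsilon\pi})$.

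The tool is Eichler's optimal-embedding criterion. An order of $K$ embeds in a local Eichler order of level $\qq^m$ with $m\ge2$ only if $\qq$ is split in $K$ or divides the conductor suitably. We check that in $F(i)$ and $F(\sqrt{-3})$ the prime $\qq$ is ramified or inert, since $\F_2$-level behaviour forbids splitting. Equivalently, the element $\gamma$ would have to fix a path of length $m\ge2$ in the Bruhat--Tits tree at $\qq$, but an elliptic element whose order in $K$ is not split at $\qq$ fixes at most a ball of radius at most $1$. The case-by-case analysis of these few candidate fields $K$, including the ones coming from $\pm\varepsilon$ and $\pm\rho,\pm\eta$ times units, is the delicate part.

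\emph{Class number.} Once every weight $|\mathcal R_I^\times/\OO_F^\times|$ equals one, the mass is exactly the number of classes. Hence $h=2^{m-2}$, and $m=2$ gives \eqref{eq:baseclassone}. I expect the embedding exclusion above to require the most care; the rest is the evaluation just described.
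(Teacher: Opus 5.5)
The field, $\zeta_F(-1)$ and mass parts of your proposal are sound. Siegel's formula $\zeta_F(-1)=\frac1{60}\bigl(2\sigma_1(4)+2\sigma_1(2)\bigr)=\frac13$ is a valid alternative to the paper's $\zeta_\Q(-1)L(-1,\chi)=(-\frac1{12})(-4)$. It is also true that $h_F^+=h_F$ forces totally positive units to be squares. Your reduction of a normalized nonscalar unit to a root of unity of order $3$, $4$ or $6$ is correct. It is equivalent to the paper's conclusion that such a unit has trace $t\in\{-1,0,1\}$. The paper gets there more cheaply: it writes $t=(a+br)/2$ and notes that definiteness forces $|b|r<4$.

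The gap is in the step you flag yourself: the exclusion at $\qq$ is never carried out, and the mechanism you state would not close it.

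\begin{itemize}
\item A fixed set that is ``at most a ball of radius at most $1$'' can still contain a path of length $2$. But $m=2$ is exactly the case $\EE_0$ on which \eqref{eq:baseclassone} and Proposition~\ref{prop:classes} depend.
\item What is true, and what you would need to prove, is that $\Z_2[\zeta_3]$ and $\Z_2[i]$ are the full maximal orders of the unramified and ramified quadratic extensions of $F_\qq=\Q_2$. This is the ``conductor'' condition you left unverified.
\item Given that, $\zeta_3$ and $\zeta_6$ fix a single vertex of the tree at $\qq$, and $i$ fixes a single edge. A unit of an Eichler order of level $\qq^m$ fixes a path of length $m\ge2$, which gives the contradiction.
\end{itemize}

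The paper replaces all of this with a congruence. Conjugate the local Eichler order to be upper triangular modulo $4$. A unit of determinant $1$ then has odd diagonal entries $a,d$ with $ad\equiv1\pmod4$. Hence $a\equiv d$ and the trace is $\equiv2\pmod4$, which is incompatible with $t\in\{-1,0,1\}$. Since you already have these trace values, this finishes your argument at once. It applies to every left order, since each is again Eichler of level $\qq^m$.

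Finally, your extra cases $F(\sqrt{-\varepsilon})$ and $F(\sqrt{-\varepsilon\pi})$ do not arise. After normalizing $\nrd(\gamma)=1$, every unit is a root of unity. Elements of reduced norm $\varepsilon\pi$ are not units of $\mathcal R$. A totally positive unit is a square, so $F(\sqrt{-\varepsilon})=F(i)$.
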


\begin{proof}
The quadratic Minkowski bound is $\sqrt{17}/2<2.1$. Every ideal class
therefore has an integral representative of norm at most $2$, and
the two primes of norm $2$ are the principal ideals $(\eta)$ and
$(\rho)$. Hence the ordinary class number is one.

Put $\epsilon=4+r$, of norm $-1$. We give an elementary unit
calculation. If a unit has first real value $1<u<\epsilon$, its
other real value is $u'=\pm u^{-1}$. Writing $u=(a+br)/2$ gives
$0<br=u-u'<2r$, because $\epsilon+\epsilon^{-1}=2r$ and
$\epsilon-\epsilon^{-1}=8<2r$. Thus $b=1$, but the unit norm
equation would give $a^2=13$ or $21$. There is no such unit.
Dividing any unit positive at the first embedding by a suitable
power of $\epsilon$ places that embedding in $[1,\epsilon)$.
Consequently
\[
               \OO_F^\times=\{\pm\epsilon^m:m\in\Z\}.
\]
The second embedding of $\epsilon$ is negative, so the totally
positive units are precisely $\epsilon^{2m}$, hence squares.
The sign patterns of $-1$ and $\epsilon$ also show that every
principal ideal has a totally positive generator. The narrow class
number is therefore one.

For completeness, let $\chi(a)=(\frac a{17})$ be the quadratic
character: it is $1$ on nonzero squares modulo $17$, $-1$ on
nonsquares, and $0$ on multiples of $17$. Its Dirichlet function is
$L(s,\chi)=\sum_{a\geq1}\chi(a)a^{-s}$, initially for $\Re(s)>1$.
The quadratic-field factorization is
$\zeta_F(s)=\zeta_\Q(s)L(s,\chi)$. The elementary special-value
formula and finite sum give
\[
 L(-1,\chi)=-\frac1{34}\sum_{a=1}^{16}\chi(a)a^2=-4,
 \qquad \zeta_F(-1)=(-1/12)(-4)=1/3.
\]

Let $\mathcal R$ be any Eichler order of the indicated level and let
$x\in\mathcal R^\times$. Its reduced norm is a totally positive
unit. Multiplying by a scalar unit, normalize $\nrd(x)=1$. If $x$
is noncentral, definiteness implies $|\sigma(\trd(x))|<2$ at both
real embeddings. Write the integral trace as
$t=(a+br)/2$, where $a,b\in\Z$ have the same parity. The difference
between its two embeddings is $br$, so $|b|r<4$ forces $b=0$.
Thus $t\in\{-1,0,1\}$.

Locally at $\qq$, conjugate $\mathcal R$ to the standard Eichler
order, which is upper triangular modulo $4$. A determinant-one
unit has odd diagonal entries with product $1$ modulo $4$.
Their sum is $2$ modulo $4$, so its trace cannot be $-1$, $0$, or
$1$. This contradiction proves that every unit is central. The same
argument applies to every left order of a locally principal ideal.

The definite Eichler mass formula \cite[Section~5]{KV} now counts
ideal classes without weights, because each projective unit group is
trivial. The finite discriminant of $D$ is $(1)$, so the mass is
\begin{equation}\label{eq:mass}
 h(\mathcal R)=2^{1-[F:\Q]}h_F|\zeta_F(-1)|
         \Nm(\qq^m)\left(1+\frac1{\Nm\qq}\right)
       =\frac12\cdot\frac13\cdot2^m\cdot\frac32
       =2^{m-2}.
\end{equation}
This proves all assertions.
\end{proof}

\subsection{Finite ideles as a notation for local lattices}

A \emph{finite idele} of $D$ is a tuple
$x=(x_{\mathfrak l})_{\mathfrak l}$, with
$x_{\mathfrak l}\in D_{\mathfrak l}^\times$, that belongs to the
local order unit group at all but finitely many primes.
These tuples form a group $\wh D^\times$. The global group
$D^\times$ embeds diagonally. Put
\[
 \wh{\mathcal R}=\prod_{\mathfrak l}\mathcal R_{\mathfrak l},
 \qquad\wh{\mathcal R}^{\times}=
                     \prod_{\mathfrak l}\mathcal R_{\mathfrak l}^{\times}.
\]
The tuple $x$ determines the right ideal
$I_x=D\cap x\wh{\mathcal R}$. Right multiplication by local order
units leaves this ideal unchanged; left multiplication by a global
element changes it within its ideal class. The local-to-global
description of lattices therefore gives the double-coset formula
\begin{equation}\label{eq:idelicclasses}
       \Cl(\mathcal R)=D^\times\backslash\wh D^\times/
                                      \wh{\mathcal R}^{\times}.
\end{equation}
This is bookkeeping for local generators of ideals. A tuple need not
be $1$ at almost every prime; it need only be a unit there. The
particular representatives used below can be chosen to be $1$ away
from the level prime.

\subsection{All classes and all three Hecke neighbors}

Put $\mathcal U_n=\wh\EE_n^\times$. A \emph{Brandt module} is the vector
space of functions on $\Cl(\EE_n)$. The \emph{Hecke operator} at a
prime away from the level sends a function to the sum of its values
on the local neighboring lattices at that prime, leaving the local
data at every other prime unchanged. Its matrix in the ideal-class
basis is called a \emph{Brandt matrix} \cite[Section~4]{DV}.
The directed graph with vertices $\Cl(\EE_n)$ and one edge for each
neighbor, counted with its multiplicity, is called a \emph{Brandt graph}.
For a split prime with residue field of size $q$, the operator has
$q+1$ neighbor terms. We always use this unnormalized neighbor sum.

In matrix notation, the local operator is the double coset
$\GL_2(\OO_{F,\pp})\operatorname{diag}(\rho,1)
 \GL_2(\OO_{F,\pp})$.
Its right cosets parametrize the $\Nm\pp+1=3$ neighbors described
in Section~\ref{sec:trees}. For each vertex, right multiplication
of its idele representative by these cosets gives its three
arithmetic neighbors. Distinct neighbors may represent the same
ideal class, producing multiplicities or fixed-state entries.
The next proposition identifies this entire matrix.

\begin{proposition}\label{prop:classes}
For every $n\geq0$, there are bijections
\begin{equation}\label{eq:classbijection}
 \Cl(\EE_n)\simeq \mathcal U_0/\mathcal U_n\simeq K/K_n\simeq\Z/2^n\Z.
\end{equation}
Under these bijections, the unnormalized Hecke operator at $\pp$ is
exactly $C_n$, the sum of the permutations induced by $A,B,C$.
\end{proposition}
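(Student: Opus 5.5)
The plan is to use the class-number-one statement \eqref{eq:baseclassone} of Lemma~\ref{lem:unitsmass} to collapse the double coset \eqref{eq:idelicclasses} to a purely local quotient at $\qq$. Then I will follow each of the three local $\pp$-neighbours of an idele representative back to a global left multiplication by $A$, $B$ or $C$.

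\emph{Step 1: the bijections.} Since $h(\EE_0)=1$, \eqref{eq:idelicclasses} gives $\wh D^\times=D^\times\mathcal U_0$. As $\mathcal U_n\subseteq\mathcal U_0$, every class in $\Cl(\EE_n)=D^\times\backslash\wh D^\times/\mathcal U_n$ has a representative in $\mathcal U_0$. Two such representatives $x,y\in\mathcal U_0$ define the same class iff $y=\gamma xu$ with $\gamma\in D^\times$ and $u\in\mathcal U_n$. In that case $\gamma=yu^{-1}x^{-1}\in D^\times\cap\mathcal U_0=\EE_0^\times=\OO_F^\times$, again by \eqref{eq:baseclassone}. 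Scalar units are central and lie in $\mathcal U_n$, so the class of $x$ is exactly its coset $x\mathcal U_n$. This gives $\Cl(\EE_n)\simeq\mathcal U_0/\mathcal U_n$.

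By \eqref{eq:eichler}, the groups $\mathcal U_0$ and $\mathcal U_n$ differ only at $\qq$. Hence $\mathcal U_0/\mathcal U_n\simeq(\EE_0)_\qq^\times/(\EE_n)_\qq^\times$. Both groups contain $\Z_2^\times$, so this quotient equals the projective quotient $K/K_n$, and \eqref{eq:localcosets} identifies that with $\Z/2^n\Z$. Concretely, a class is represented by an idele equal to $1$ away from $\qq$ with component $x_\qq\in(\EE_0)_\qq^\times$ at $\qq$, and it corresponds to $\widetilde{x_\qq}(0)$.

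\emph{Step 2: the three local cosets.} I will show that $A\OO_\pp$, $B\OO_\pp$, $C\OO_\pp$ are precisely the three right cosets in $\GL_2(\OO_{F,\pp})\operatorname{diag}(\rho,1)\GL_2(\OO_{F,\pp})$. Each of $A,B,C$ is integral with reduced norm $\rho$ of valuation one, so each lies in this double coset. It remains to show the three cosets are distinct. If $A\OO_\pp=B\OO_\pp$, then $A^{-1}B\in\OO_\pp$, so its trace would be integral. But $A^{-1}=-A/\rho$ by \eqref{eq:generatornorms}, so
\[
\trd(A^{-1}B)=-\trd(AB)/\rho=\alpha/\rho .
\]
At $\pp$ the residue of $\omega$ is $1$, so $\alpha=\omega-2\equiv-1$ is a unit and $\alpha/\rho$ has valuation $-1$. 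This is a contradiction, and the same argument applies to the other pairs by \eqref{eq:pairtraces}. Since there are exactly $\Nm\pp+1=3$ cosets, these are all of them. I expect this distinctness check to be the main point where the specific generators matter; everything else is formal.

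\emph{Step 3: matching the neighbours.} Let $x$ be the idele representative from Step~1, and let $y$ be the neighbour obtained by replacing its $\pp$-component $1$ with $A$. Left multiplication by the global element $A^{-1}$ does not change the class. After it, the $\pp$-component is $1$ and the $\qq$-component is $A^{-1}x_\qq$. At every other prime $\mathfrak l\ne\pp$ the component is $A^{-1}$, which lies in $\OO_\mathfrak l^\times=(\EE_n)_\mathfrak l^\times$, because $A\in\OO$ and $\nrd(A)=\rho$ is an $\mathfrak l$-unit. So these components can be absorbed into $\mathcal U_n$.

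At $\qq$, \eqref{eq:scalarmod2} places $A^{-1}x_\qq$ in $(\EE_0)_\qq^\times$ up to scalars, and projectively $A^{-1}=A$. Hence the neighbour is the class corresponding to $\widetilde A(\widetilde{x_\qq}(0))$, and the same holds for $B$ and $C$. The neighbour sum therefore agrees entry by entry, with multiplicities, with $C_n$, the sum of the permutation matrices of $\widetilde A,\widetilde B,\widetilde C$.

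Finally, I must check that the left-action convention is consistent with the adjacency convention $(C_nf)(t)=\sum f(\widetilde g(t))$. This is automatic, because each of $A,B,C$ is projectively an involution.
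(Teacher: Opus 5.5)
Your proposal is correct and follows essentially the paper's proof. You collapse $\Cl(\EE_n)$ to $\mathcal U_0/\mathcal U_n$ using $h(\EE_0)=1$ and $\EE_0^\times=\OO_F^\times$, show that $A,B,C$ give the three distinct right cosets at $\pp$ because $g^{-1}h=-gh/\rho$ is not integral, and move each neighbour back to a $\qq$-supported representative by the global left multiplication by $g^{-1}$, which is projectively $g$. The only differences are cosmetic: you test integrality directly through $\trd(g^{-1}h)=\alpha/\rho$ rather than through the elementary divisors of $gh$, and you compute the components of $g^{-1}y$ directly instead of using the paper's factorization $t_g=gz_g$.
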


\begin{proof}

The following elementary coset principle is useful. If $G=H\mathcal U_0$
for subgroups $H,\mathcal U_0$, and $U\subseteq \mathcal U_0$, then
$H\backslash G/U\cong(H\cap \mathcal U_0)\backslash \mathcal U_0/U$.
Indeed, every orbit under left multiplication by $H$ has a
representative in $\mathcal U_0$. If $u,u'\in\mathcal U_0$ represent
the same double coset, then $u'=huv$ with $h\in H$, $v\in U$.
Hence $h=u'v^{-1}u^{-1}\in H\cap\mathcal U_0$, which proves
both injectivity and the asserted equivalence relation.

Since $h(\EE_0)=1$, formula~\eqref{eq:idelicclasses} says precisely
that $\wh D^\times=D^\times \mathcal U_0$. Applying this principle gives
\[
 \Cl(\EE_n)\simeq(D^\times\cap \mathcal U_0)\backslash \mathcal U_0/\mathcal U_n
                       =\EE_0^\times\backslash \mathcal U_0/\mathcal U_n.
\]
The group $\EE_0^\times=\OO_F^\times$ is central and contained in
$\mathcal U_n$, so its left action on $\mathcal U_0/\mathcal U_n$ is trivial. The only local
factor in which $\mathcal U_n$ differs from $\mathcal U_0$ is $\qq$. Dividing both local
groups by their common center and applying \eqref{eq:localcosets}
proves the bijections. Explicit representatives are the finite ideles
$u_t$, for $t\bmod2^n$, whose component at $\qq$ is
\[
                 (u_t)_{\qq}=\begin{pmatrix}1&2t\\0&1\end{pmatrix}
\]
and whose other components are $1$. The corresponding right ideal is
$I_t=D\cap u_t\wh\EE_n$. In particular, every class has a representative
$u\in \mathcal U_0$ supported only at $\qq$.

We now compute the Hecke action, including its coset direction.
At $\pp$, identify $\OO_{\pp}$ with $M_2(\OO_{F,\pp})$.
The elements $A,B,C$ are integral of determinant valuation one. They
therefore belong to the Hecke double coset of
$\operatorname{diag}(\rho,1)$. They represent three distinct right
cosets of $\OO_{\pp}^{\times}$. Indeed, for distinct $g,h$ among
$A,B,C$, \eqref{eq:pairtraces} gives $\trd(gh)=\rho-1$, a
$\pp$-adic unit, while $\nrd(gh)=\rho^2$. Thus the elementary
divisors of $gh$ have valuations $0,2$. As
$g^{-1}h=-gh/\rho$, it is not an integral invertible matrix.
Hence $g\OO_{\pp}^\times\ne h\OO_{\pp}^\times$. There are precisely
$\Nm\pp+1=3$ such cosets, so these are all of them.

For $g\in\{A,B,C\}$, let $t_g\in\wh D^\times$ equal $g$ at
$\pp$ and $1$ at every other finite place. Write
\[
 t_g=gz_g,\qquad (z_g)_{\pp}=1,\qquad
                    (z_g)_{\mathfrak l}=g^{-1}
                    \quad(\mathfrak l\ne\pp).
\]
The element $z_g$ belongs to $\mathcal U_0$: away from $\pp$ each generator
is an order unit, and at $\qq$ it is scalar modulo $2$.
For a representative $u$ supported only at $\qq$, the finite ideles
$u$ and $t_g$ commute. Therefore
\begin{equation}\label{eq:heckecoset}
                   [u t_g]=[t_g u]=[g z_g u]=[z_g u]
                         \quad\hbox{in }\Cl(\EE_n).
\end{equation}
In the quotient $K/K_n$, this is left multiplication by
$g_{\qq}^{-1}$. But $g^{-1}=-g/\rho$, and $-1/\rho$ is a scalar
unit at $\qq$, so it induces precisely the same permutation as $g$.
Thus the three Hecke neighbors of a residue $t$ are
$\widetilde A(t)$, $\widetilde B(t)$, and $\widetilde C(t)$,
with their multiplicities. This proves $T_{\pp}=C_n$.
\end{proof}

\begin{remark}[Torsion and the tree]\label{rem:torsion}

The notation $\EE_0[1/\rho]$ allows denominators that are powers
of $\rho$, changing integrality only at $\pp$. Let
$\operatorname P$ denote image in $D^\times/F^\times$.
The arithmetic group
$\Gamma=\operatorname P(\EE_0[1/\rho]^\times)$ acts simply
transitively on the vertices of the cubic Bruhat--Tits tree at $\pp$.
To see transitivity, prescribe a locally principal right ideal that
is $\EE_0$ away from $\pp$ and represents a chosen tree vertex at
$\pp$; equation~\eqref{eq:baseclassone} makes it principal.
A base-vertex stabilizer becomes an element of $\EE_0^\times$ after
multiplication by a scalar power of $\rho$, hence is projectively
trivial. The three generators give the three distinct neighbors by
the elementary-divisor argument above. They generate $\Gamma$, and
a reduced word traces a nonbacktracking path. Consequently
\[
                    \Gamma=\langle[A],[B],[C]\rangle
                              \simeq C_2*C_2*C_2.
\]

Here $C_2$ is the cyclic group of order two and $*$ is free product.
Its usual Cayley graph with these three involutions is the cubic tree.
The arithmetic realization of this free product has the additional
structure of the second local action, which makes its finite
Schreier graphs accessible to the Hecke bound. The abstract free
product presentation alone would not impose that bound on arbitrary
finite permutation actions.

These involutions invert edges. Their torsion is compatible with the
construction: fixed states in a finite quotient give the diagonal
contributions in \eqref{eq:adjacency}. All vertex weights are one by
Lemma~\ref{lem:unitsmass}.
\end{remark}

\section{The Ramanujan bound}
\label{sec:ramanujan}

\subsection{The arithmetic theorem and the meaning of its terms}

We state the needed spectral input in the form that matches our
finite graph. This also specifies the point at which the proof uses
results beyond elementary group and local matrix calculations.

The theorem below combines Jacquet--Langlands, in the form explained in
\cite[Theorem~3.9]{DV}, with the Hilbert modular Ramanujan theorem
\cite[Theorem~1]{Blasius}.

\begin{theorem}[Arithmetic spectral input]\label{thm:arithmeticinput}
Let $F$ be a totally real field of narrow class number one, and let
$D/F$ be a quaternion algebra ramified at every real place and at no
finite place. Let $\mathcal R$ be an Eichler order of level
$\mathfrak N$. The orthogonal complement of the constant functions in its scalar
Brandt module transfers, with the Hecke action away from $\mathfrak N$, to
parallel-weight-two Hilbert cusp forms with trivial central character.
For a prime $\mathfrak l\nmid\mathfrak N$, every corresponding
unnormalized Hecke eigenvalue satisfies
\[
                      |\lambda_{\mathfrak l}|
                               \leq2\sqrt{\Nm\mathfrak l}.
\]
This orthogonal complement is taken with respect to the standard
Brandt inner product. If the projective unit groups of the left orders
are all trivial, this is the ordinary inner product on the class set.
\end{theorem}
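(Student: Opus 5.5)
This statement is the paper's imported input rather than something to be derived from the earlier sections. The plan is therefore to assemble it from three standard ingredients and to check that the hypotheses line up.

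\emph{Step 1: the adelic dictionary.} Using \eqref{eq:idelicclasses}, I identify the scalar Brandt module of $\mathcal R$ with the space of functions on
\[
D^\times\backslash \wh D^\times/\wh{\mathcal R}^\times .
\]
Since $D$ is definite at every real place, the archimedean component is compact modulo center. Such functions are then automorphic forms on $D^\times_{\mathbb A}$ of trivial infinite type, which corresponds to parallel weight two. Narrow class number one lets me treat the central character as trivial: each double coset is stable under $\wh{\OO}_F^\times F^\times$. The Brandt inner product is weighted by $1/|\mathcal R_I^\times/\OO_F^\times|$, which is the Petersson/Haar measure on the finite quotient. When all weights equal one, this is the ordinary inner product.

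\emph{Step 2: constants versus one-dimensional representations.} The functions factoring through the reduced norm are $\chi\circ\nrd$ for characters $\chi$ of
\[
F^\times_{+}\backslash\wh F^\times/\nrd(\wh{\mathcal R}^\times).
\]
Because the narrow class group is trivial and $\nrd$ of a local Eichler order is all local units, these characters are trivial. So the one-dimensional automorphic constituents are exactly the constants. I then decompose the orthogonal complement into Hecke eigenspaces, using the Hecke operators $T_{\mathfrak l}$ for $\mathfrak l\nmid\mathfrak N$, which are self-adjoint for the Brandt inner product. Each eigenvector generates an infinite-dimensional automorphic representation $\pi'$ of $D^\times$ with trivial central character.

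\emph{Step 3: transfer and Ramanujan.} I apply Jacquet--Langlands, in the form of \cite[Theorem~3.9]{DV}. Since $D$ is unramified at all finite places, $\pi'$ corresponds to a cuspidal $\pi$ of $\GL_2$ over $F$ with the same local components at every finite $\mathfrak l\nmid\mathfrak N$. Because $\pi'_\infty$ is trivial, $\pi$ is discrete series of weight $2$ at each real place. The $T_{\mathfrak l}$-eigenvalues therefore agree with those of a parallel weight two Hilbert newform or oldform of level dividing $\mathfrak N$. Finally, \cite[Theorem~1]{Blasius} gives Ramanujan at all unramified places for such forms. In the unnormalized neighbor-sum convention this reads $|\lambda_{\mathfrak l}|\le 2\sqrt{\Nm\mathfrak l}$, since the Satake parameters have absolute value $\Nm\mathfrak l^{1/2}$.

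\emph{Main obstacle.} The delicate step is the normalization bookkeeping in Steps 1 and 3. Three points need care:
\begin{itemize}
\item the unnormalized Brandt operator must equal $\Nm\mathfrak l^{1/2}$ times the unitary-normalized Hecke eigenvalue;
\item the trivial central character must really follow from narrow class number one;
\item the orthogonal complement of constants must be exactly the cuspidal (infinite-dimensional) part.
\end{itemize}
For the first point, I would check it against the cubic tree, where $q+1$ neighbors give the bound $2\sqrt q$.
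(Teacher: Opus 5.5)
Your proposal is correct and takes essentially the same route as the paper. The paper also treats this statement as Jacquet--Langlands in the form of \cite[Theorem~3.9]{DV} combined with Blasius's Ramanujan theorem. In the proof of the main theorem it likewise identifies the norm-character constituents with the constants, using narrow class number one and $\nrd(\wh{\mathcal R}^\times)=\wh\OO_F^\times$, and it turns Satake parameters of absolute value $\sqrt{\Nm\mathfrak l}$ into the bound by the triangle inequality. One small point to tighten: triviality of the central character comes from $\wh F^\times=F^\times\wh\OO_F^\times$ (class number one). Stability of the double cosets under $F^\times\wh\OO_F^\times$ alone does not give it, since that stability holds for every $F$.
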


A \emph{Hilbert modular form} over a totally real field is the
analogue of a modular form with one upper-half-plane variable for
each real embedding. \emph{Parallel weight two} means weight two
in every variable; for $F=\Q(\sqrt{17})$ the weight is $(2,2)$.
\emph{Cuspidal} means that the form vanishes at the cusps, the
boundary points in the modular interpretation. The
Jacquet--Langlands correspondence identifies the non-character
part of a definite quaternionic module with such cusp forms while
preserving the relevant Hecke eigenvalues. This is the analytic
interpretation of the finite-dimensional vector space used here.

The scalar quaternionic module consists of complex-valued functions
on the ideal classes; no extra representation is attached to a
vertex. Its possible one-dimensional exceptional constituents are
\emph{norm characters}, functions obtained by applying a character
of a field idele-class group to the reduced norm of an idele.
The class-number condition will make all such functions constant.
The \emph{central character} records the action of scalar ideles;
it is trivial when every scalar idele acts as the identity.

At a prime away from the level, choose a simultaneous Hecke
eigenvector in the cuspidal subspace with eigenvalue $\lambda_{\mathfrak l}$. Its two
\emph{Satake parameters}, in the weight-two arithmetic normalization,
are the roots $\beta_{\mathfrak l},\gamma_{\mathfrak l}$ of
\[
                 X^2-\lambda_{\mathfrak l}X+\Nm\mathfrak l.
\]
The Ramanujan theorem says that both roots have complex absolute
value $\sqrt{\Nm\mathfrak l}$. Their sum is the eigenvalue, so the
displayed bound follows by the triangle inequality. In normalized
adjacency the same bound is divided by $\Nm\mathfrak l+1$.
For comparison, the constant function has eigenvalue
$\Nm\mathfrak l+1$, and its polynomial has roots $1$ and
$\Nm\mathfrak l$. This explains why identifying and removing
the constant constituent is part of the proof.

\subsection{Checking the hypotheses and proving the graph theorem}

\begin{proof}[Proof of Theorem~\ref{thm:main}]
By Proposition~\ref{prop:classes}, the entire operator $C_n$ is the
unnormalized Hecke operator $T_{\pp}$ on the ideal classes of the
Eichler order $\EE_n$, of level $\qq^{n+2}$. It has three terms at
every vertex. The projective unit groups of all left orders are
trivial by Lemma~\ref{lem:unitsmass}; hence the usual inner product
on this Brandt module is a constant multiple of the ordinary
Euclidean inner product.

We identify the noncuspidal part. The one-dimensional automorphic
constituents of a definite quaternionic module factor through the
reduced norm; see \cite[Definition~3.7 and equation~(3.8)]{DV} or
\cite[Section~7]{KV}. At every finite place, the reduced norm of the
Eichler unit group is the full local unit group: after local
conjugation it contains every matrix $\operatorname{diag}(u,1)$
with $u$ a local unit. Thus such characters factor through the narrow
ideal class group of $F$. This group is trivial by
Lemma~\ref{lem:unitsmass}. The noncuspidal subspace is therefore
exactly $\C\one$, and its Hecke eigenvalue is $3$.

The orthogonal complement $\one^\perp$ is the scalar cuspidal
quaternionic module. The algebra $D$ is ramified at both real places
and at no finite place. The Jacquet--Langlands correspondence
\cite[Theorem~3.9]{DV} identifies this module, with its Hecke action
away from the level, with parallel-weight-$(2,2)$ Hilbert cusp forms
of level $\qq^{n+2}$. The central character is trivial: finite
central ideles act trivially on the ideal-class module because
$h_F=1$ and local central units lie in $\mathcal U_n$.

The active prime $\pp$ is distinct from $\qq$, so the transferred
representations are unramified at $\pp$. For a simultaneous Hecke
eigenvector, let $\beta_\pp,\gamma_\pp$ denote the Satake parameters
in the weight-two arithmetic normalization. They satisfy
\[
 \beta_\pp+\gamma_\pp=\lambda_\pp,\qquad
 \beta_\pp\gamma_\pp=\Nm\pp=2.
\]
The Hilbert modular Ramanujan theorem \cite[Theorem~1]{Blasius}
gives
\[
                  |\beta_\pp|=|\gamma_\pp|=\sqrt2.
\]
Its hypotheses include weights $(2,2)$ over this degree-two totally
real field, and its conclusion applies at all finite places,
including dyadic places. Therefore
\begin{equation}\label{eq:Heckebound}
                |\lambda_\pp|\leq2\sqrt2
                     \quad\hbox{on }\one^\perp.
\end{equation}
Equivalently, one can apply the Ramanujan statement for these prime
ideal graphs in \cite[Theorem~7.6]{KV}; here
$\pp\nmid (17)\operatorname{disc}(D)\qq^{n+2}$.

Finally, \eqref{eq:Tconjugacy} conjugates $M_n$ to $C_n$ by a
permutation matrix preserving constants. Equation~\eqref{eq:Heckebound}
therefore gives \eqref{eq:bound}. Since $3>2\sqrt2$, the eigenspace
of $3$ is exactly the one-dimensional constant subspace. A finite
symmetric nonnegative matrix with all row sums $3$ has this property
if and only if its graph is connected. This proves connectedness
and the simplicity of the eigenvalue $3$, and completes the theorem. 
Compatibility of the intertwiners is \eqref{eq:towerintertwiner}.
\end{proof}

\begin{corollary}
The normalized adjacency $P_n=M_n/3$ satisfies
\[
                  \|P_n|_{\one^\perp}\|\leq\frac{2\sqrt2}{3}<1
                  \qquad(n\geq1).
\]
Thus the Bellaterra level graphs have a uniform absolute spectral gap
of at least $1-2\sqrt2/3$.
\end{corollary}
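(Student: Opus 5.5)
The corollary follows directly from Theorem~\ref{thm:main}, with no further arithmetic. First I will check that $P_n$ preserves $\one^\perp$. Since $M_n=a_n+b_n+c_n$ is a sum of permutation matrices of involutions, each $a_n,b_n,c_n$ is a symmetric orthogonal matrix fixing $\one$. Hence $M_n$ is real symmetric with $M_n\one=3\one$. The orthogonal complement $\one^\perp$ is therefore invariant under $M_n$, and $M_n|_{\one^\perp}$ is a self-adjoint operator for the inner product $\langle f,g\rangle$ fixed before Theorem~\ref{thm:main}.

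Next I will use the spectral theorem. For a self-adjoint operator on a finite-dimensional inner product space, the operator norm equals the largest absolute value of an eigenvalue. Combining this with \eqref{eq:bound}, every eigenvalue $\lambda$ of $M_n|_{\one^\perp}$ satisfies $|\lambda|\le 2\sqrt2$. This gives
\[
\|P_n|_{\one^\perp}\|=\tfrac13\max_{\lambda\in\Spec(M_n|_{\one^\perp})}|\lambda|\le\tfrac{2\sqrt2}{3}.
\]

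Finally, I will compare constants: $8<9$ gives $2\sqrt2<3$, so $2\sqrt2/3<1$. The absolute spectral gap is then at least $\varepsilon=1-2\sqrt2/3$, uniformly in $n\ge1$, matching the definition of two-sided expanders given earlier.

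There is no genuine obstacle in this argument. The only point needing care is that the norm bound is two-sided: \eqref{eq:bound} controls negative eigenvalues as well, including the exclusion of $-3$. That is why we obtain absolute rather than one-sided expansion.
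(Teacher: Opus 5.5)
Your proposal is correct and matches the paper's approach, which states the corollary without separate proof as an immediate consequence of Theorem~\ref{thm:main}. The argument is: $M_n$ is real symmetric with $M_n\one=3\one$, so $M_n$ restricted to $\one^\perp$ is self-adjoint and its norm is the largest $|\lambda|$; dividing the bound \eqref{eq:bound} by $3$ and noting $2\sqrt2<3$ finishes it.
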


\begin{corollary}[The explicit arithmetic realization]
With the parameters \eqref{eq:parameters}, the order
\eqref{eq:eichler}, and the permutation matrix \eqref{eq:Tentries},
\begin{equation}\label{eq:mainintertwiner}
          C_nT_n=T_nM_n,\qquad
          C_n=T_{\pp}\text{ on }\C^{\Cl(\EE_n)}.
\end{equation}
The order has level $\qq^{n+2}$ and exactly $2^n$ ideal classes.
The Hecke operator has row sum $\Nm\pp+1=3$. The matrices $T_n$
give one compatible, color-preserving isomorphism of graph towers.
\end{corollary}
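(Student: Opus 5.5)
The corollary collects facts that are already proved, so the plan is to cite each one in turn. No new arithmetic is needed.

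First, the identity $C_nT_n=T_nM_n$ is the first part of \eqref{eq:Tconjugacy}. It comes from Proposition~\ref{prop:coding}: the relations \eqref{eq:intertwinegenerators} give $\widetilde A_n T_n=T_na_n$, and similarly for $b_n$ and $c_n$. Summing the three relations gives the stated identity.

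Second, the identity $C_n=T_\pp$ on $\C^{\Cl(\EE_n)}$ is the second assertion of Proposition~\ref{prop:classes}. Here the class set is identified with $\Z/2^n\Z$ through the representatives $u_t$. The level statement follows from conjugating $(\EE_n)_\qq$ by $\operatorname{diag}(2^{n+1},1)$, which shows it is a standard Eichler order of exponent $n+2$; equivalently, it is $\End(L_-)\cap\End(L_+)$ for two vertices at distance $n+2$. The class count then follows from Lemma~\ref{lem:unitsmass} with $m=n+2$, which gives $h(\EE_n)=2^{n}$. The bijection \eqref{eq:classbijection} confirms this count independently.

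The row sum $\Nm\pp+1=3$ has two sources. Arithmetically, it comes from the three right cosets of $\OO_\pp^\times$ in the double coset of $\operatorname{diag}(\rho,1)$, which were shown in the proof of Proposition~\ref{prop:classes} to be represented by $A,B,C$. Combinatorially, $C_n$ is a sum of three permutation matrices.

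Third, color preservation means the single bijection $\kappa_n$ carries each individual state matrix to its own arithmetic permutation, and not merely the sum. This is again \eqref{eq:intertwinegenerators}, applied to each generator separately. Compatibility of the towers is \eqref{eq:towerintertwiner}, which follows from \eqref{eq:prefixcompatibility}.

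The only point requiring care is that identifying $C_n$ with $T_\pp$ respects these labels. Each of $A,B,C$ corresponds to a distinct Hecke coset at $\pp$, and in the proof of Proposition~\ref{prop:classes} the neighbor given by the coset of $g$ was computed to be the residue $\widetilde g(t)$. That match is exactly what is needed, and no step beyond these citations is expected to be an obstacle.
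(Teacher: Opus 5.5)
Your proposal is correct and matches the paper, which gives no separate argument for this corollary: it is read off from \eqref{eq:Tconjugacy} (via Proposition~\ref{prop:coding}), Proposition~\ref{prop:classes}, the $\operatorname{diag}(2^{n+1},1)$ level computation together with Lemma~\ref{lem:unitsmass} at $m=n+2$, and \eqref{eq:towerintertwiner}, exactly as you assemble them. One small nuance: the count coming from \eqref{eq:classbijection} is not fully independent of the mass formula, since it rests on $h(\EE_0)=1$, which is the case $m=2$ of the same lemma.
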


The six recursion identities give the word-to-residue map, and the
class-number-one argument identifies the space of functions on all ideal classes.
Together they identify the Bellaterra adjacency with the Hecke
operator to which the arithmetic bound applies.

\section{Acknowledgement}

The paper was written with Chat GPT 6 Pro editing assistance and assistance with finding known results needed for 
Theorem 8.1. 
The content of the paper was carefully checked by the authors
and the authors carry the full responsibility for the content of the paper. The first author thanks Dolciani-Holloran foundation for the support. The second author thanks PSC CUNY for the support.


\begin{thebibliography}{RSV19}
\small
\setlength{\itemsep}{5pt}

\bibitem[Bla06]{Blasius}
Don Blasius,
\emph{Hilbert modular forms and the Ramanujan conjecture},
in C.~Consani and M.~Marcolli (eds.),
\emph{Noncommutative Geometry and Number Theory},
Aspects of Mathematics E37, Vieweg, Wiesbaden, 2006, pp.~35--56.
\href{https://doi.org/10.1007/978-3-8348-0352-8_2}
{doi:10.1007/978-3-8348-0352-8\_2}.

\bibitem[BGV26]{BGV}
Ievgen Bondarenko, Rostislav Grigorchuk, and Alina Vdovina,
\emph{Ramanujan subshifts}, 2026.
\href{https://arxiv.org/abs/2602.22356}{arXiv:2602.22356}.

\bibitem[DV13]{DV}
Lassina Demb\'el\'e and John Voight,
\emph{Explicit methods for Hilbert modular forms},
in \emph{Elliptic Curves, Hilbert Modular Forms and Galois Deformations},
Advanced Courses in Mathematics--CRM Barcelona,
Birkh\"auser, Basel, 2013, pp.~135--198.
\href{https://doi.org/10.1007/978-3-0348-0618-3_4}
{doi:10.1007/978-3-0348-0618-3\_4}.

\bibitem[Gri05]{Gri05}
Rostislav Grigorchuk,
\emph{Solved and Unsolved Problems Around One Group},
in L.~Bartholdi, T.~Ceccherini-Silberstein, T.~Smirnova-Nagnibeda,
and A.~\.{Z}uk (eds.),
\emph{Infinite Groups: Geometric, Combinatorial and Dynamical Aspects},
Progress in Mathematics \textbf{248}, Birkh\"auser, Basel, 2005,
pp.~117--218.
\href{https://doi.org/10.1007/3-7643-7447-0_5}
{doi:10.1007/3-7643-7447-0\_5}.

\bibitem[Gri11]{Gri11}
R.~I.~Grigorchuk,
\emph{Some Topics in the Dynamics of Group Actions on Rooted Trees},
Proceedings of the Steklov Institute of Mathematics \textbf{273}
(2011), 64--175.
\href{https://doi.org/10.1134/S0081543811040067}
{doi:10.1134/S0081543811040067}.

\bibitem[KV10]{KV}
Markus Kirschmer and John Voight,
\emph{Algorithmic enumeration of ideal classes for quaternion orders},
SIAM Journal on Computing \textbf{39} (2010), no.~5, 1714--1747.
\href{https://doi.org/10.1137/080734467}{doi:10.1137/080734467}.
Corrigendum, \textbf{41} (2012), no.~3, 714.
\href{https://doi.org/10.1137/120866063}{doi:10.1137/120866063}.

\bibitem[MP17]{MP}
Anton Malyshev and Igor Pak,
\emph{Lifts, derandomization, and diameters of Schreier graphs of
Mealy automata}, Combinatorica \textbf{37} (2017), 733--765.
\href{https://doi.org/10.1007/s00493-016-3306-0}
{doi:10.1007/s00493-016-3306-0}.

\bibitem[RSV19]{RSV}
Nithi Rungtanapirom, Jakob Stix, and Alina Vdovina,
\emph{Infinite series of quaternionic 1-vertex cube complexes, the
doubling construction, and explicit cubical Ramanujan complexes},
International Journal of Algebra and Computation \textbf{29} (2019),
no.~6, 951--1007.
\href{https://doi.org/10.1142/S0218196719500371}
{doi:10.1142/S0218196719500371}.

\bibitem[VV10]{VV}
Mariya Vorobets and Yaroslav Vorobets,
\emph{On a series of finite automata defining free transformation groups},
Groups, Geometry, and Dynamics \textbf{4} (2010), no.~2, 377--405.
\href{https://doi.org/10.4171/GGD/87}{doi:10.4171/GGD/87}.

\end{thebibliography}
\end{document}